\documentclass{amsart}
\usepackage{color,subfigure}
\newcommand{\Emax}{E_\text{max}}
\newcommand{\Etot}{E_\text{tot}}
\newcommand{\lambdamin}{\lambda_\text{min}}
\newcommand{\e}{{\rm e}}
\newcommand{\jan}[1]{{#1}}
\newcommand{\leaveout}[1]{}
\DeclareMathOperator{\argmin}{argmin}

\subfigcapmargin -1in

\usepackage{all,multirow,multicol}

\begin{document}

\title{Energy minimisation of repelling particles\\ on a toric grid}
\author[Niek Bouman, Jan Draisma, and Johan van
Leeuwaarden]{Niek Bouman$^\star$, Jan Draisma$^\ast$, and Johan van Leeuwaarden$^\bullet$\\ \ \\
Eindhoven University of Technology, \\
Department of Mathematics \& Computer Science, \\
P.O. Box 513, 5600 MB Eindhoven, The Netherlands}
\thanks{$^\star$Supported by Microsoft Research through its PhD
Scholarship Programme}
\thanks{$^\ast$ Supported by a Vidi grant from The
Netherlands Organisation for Scientific Research (NWO); also
Centrum voor Wiskunde en Informatica, Amsterdam}
\thanks{$^\bullet$ Supported by an ERC starting grant and a TOP grant from The
Netherlands Organisation for Scientific Research (NWO)}

\begin{abstract}
We explore the minimum energy configurations of repelling particles
distributed over $n$ possible locations forming a toric grid. We
conjecture that the most energy-efficient way to distribute
$n/2$ particles over this space is to place them in a checkerboard
pattern. Numerical experiments validate this conjecture for reasonable
choices of the repelling force. In the present paper, we prove this
conjecture in a large number of special cases---most notably, when the
sizes of the torus are either two or multiples of four in all dimensions
and the repelling force is a completely monotonic function of the Lee
distance between the particles.
\end{abstract}

%\date{29 February 2012}
\maketitle

\section{Introduction}
The problem of distributing particles uniformly over \jan{a} metric space has proved an inspiring question for mathematicians, and has also attracted the attention of biologists, chemists, and physicists working on crystallography, molecular structures, and electrostatics.

%In order to create a uniform distribution
\jan{We} consider an interacting particle system \jan{with}
 a repelling force \jan{between} the particles.
\jan{Under influence} of the force, the particles \jan{arrange
themselves} in the most energy-efficient manner (one that
minimises the total amount of energy in the system). The
rationale is that \jan{repelling} particles  tend to
spread apart until there is as much distance between them as
possible, \jan{which results in some sort of ``uniform
distribution'' of the particles}.

We investigate the case in which the repelling force between
any pair of particles is \jan{given by} a function $f$ of
\jan{their} distance. This
function $f$ can take the form $f(x)=x^{-\alpha}$, with $\alpha>0$. This is a
generalisation of the {\it inverse square law} that is well known in physics, where it
describes the behaviour of forces like electrical charge. It also contains the reduction in
power density of an electromagnetic wave as it propagates through space, known as {\it path
loss}.  Path loss plays a crucial role in the design of wireless networks, for which the
exponent $\alpha$ normally ranges from 2 to 4 (where 2 is for propagation in free space, and 4 is for relatively lossy environments \cite{Jorgen95}). Note that, since $f$ is a decreasing function, we assume that the repelling force becomes stronger when the particles move closer to each other.

Denote the locations of the $N$ particles by
$x_1,\ldots,x_N$. The $f$-energy experienced by particle
$x_i$ is defined as $\sum_{j\neq i}f(\delta(x_i,x_j))$, \jan{where
$\delta$ denotes the metric}. Our
original motivation stems from the area of wireless
networks, in which particles are mobile users, and the
$f$-energy of a user is \jan{its} interference \jan{with}
all other users. When experiencing too much interference, a
user is unable to transmit data \cite{Baccelli09,Gupta00}.
\jan{For} a given maximal sustainable
interference level $T$, a natural question is to find the
maximal packing of users that can be active simultaneously
without violating the threshold $T$, \jan{i.e., while
maintaining $f$-energy at most $T$
for each user. In the case
that we will deal with, this question is
intimately related to the problem of minimising the total
energy among all pairs of particles; see
Section~\ref{sec:Framework}.}

\jan{A lot of research in energy minimisation} focuses on
spheres. In \jan{dimension one}, the uniform distribution on the
circle is simply given by the $N$ equally spaced points
(roots of unity \jan{in the complex plane}), but in higher
dimensions it becomes less obvious on what type of patterns
\jan{the particles arrange themselves}, \jan{and this presents} al sorts of mathematical challenges \cite{Cohn07,Hardin04,Kuijlaars97}.

In this paper, \jan{rather than spheres or other
continuous metric spaces,} we consider a
$d$-dimensional grid as possible locations for the
particles. Further, to avoid boundary effects, we wrap
around the boundaries. \jan{This grid} has a natural embedding on a $d$-dimensional torus, see Figure~\ref{torus}, which is why we call it the $d$-dimensional {\em toric grid}.

Let $n_i$ be the size of the grid in dimension~$i$, so that we have a
total of $n=n_1 \cdots n_d$ possible locations for the
particles. \jan{It seems reasonable to expect that}
the best way, in terms of minimal energy, to distribute
$n/2$ particles over this space is to place them in a {\it checkerboard
pattern}, by which we mean \jan{a} pattern in which exactly one particle is
placed at any two neighbouring points. \jan{Of course, because of the
wrapping around, such a pattern exists only when all $n_i$ are
even.}

\jan{For} any toric grid, \jan{regardless of} its dimensions, there
are only two such configurations (see Figure~\ref{square2}), \jan{and
the} checkerboard pattern is \jan{a} natural \jan{``uniform
distribution''} on a toric grid.  \jan{It is also a maximal-size
{\it independent set} in the graph whose vertices are the possible
locations and whose edges connect locations differing by $\pm 1$ in one
coordinate and not at all in the remaining coordinates.} The Lee distance
between two locations is defined as the length of the shortest path,
in terms of edges, between the corresponding vertices on this graph.
\jan{The following conjecture formalises, to some extent, our intuition
that checkerboard patterns are optimal.}

\begin{conj}[Checkerboard Conjecture]
\jan{For any toric grid with $n_1,\ldots,n_d$ all even, and for any
reasonable function $f$ of the Lee distance, the arrangements of $N=n/2$
particles that minimise the maximal $f$-energy experienced by any of
the particles are the two checkerboard patterns, and only these.}
\end{conj}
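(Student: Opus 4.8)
The plan is to peel off two reductions --- from the min--max problem of the conjecture to a min--total problem, and from ``reasonable'' kernels to geometric ones --- and then to settle the remaining case by harmonic analysis on the abelian group $G=\mathbf{Z}/n_1\times\cdots\times\mathbf{Z}/n_d$.

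\textbf{Reductions.} The group of translations of $G$ of even coordinate-sum acts transitively on each checkerboard by Lee-isometries, so in a checkerboard every particle experiences the same energy; hence its maximal energy equals its average energy, i.e.\ its total $f$-energy divided by $N$. Invoking the framework of Section~\ref{sec:Framework}, it therefore suffices to prove that the two checkerboards are the \emph{unique} minimisers of the total $f$-energy $\sum_{i\ne j}f(\delta(x_i,x_j))$: for then, for any competing configuration, the checkerboard's maximum $=$ (min total)$/N\le$ (its total)$/N=$ its average $\le$ its maximum, with equality forcing the competitor to be a total-minimiser, hence a checkerboard. Next, reading ``reasonable'' as completely monotonic and non-constant, the Bernstein/Hausdorff representation writes the restriction of $f$ to positive integers as $m\mapsto c+\int_{(0,1)}\theta^{m}\,d\mu(\theta)$ with $c$ a constant and $\mu$ a non-zero positive measure. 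As the total-energy functional is linear in $f$ and $c$ only shifts it by a configuration-independent constant, it is enough to prove, for every fixed $\theta\in(0,1)$, that the checkerboards are the unique total-energy minimisers for the geometric kernel $m\mapsto\theta^{m}$; intersecting the (identical) minimiser sets over $\theta$ in the support of $\mu$ then gives the statement for $f$.

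\textbf{Harmonic analysis.} The Lee distance is the coordinatewise sum of the cyclic distances, so $\theta^{\delta(x,y)}=\prod_{i=1}^{d}\theta^{\delta_i(x_i,y_i)}$ factorises, the energy kernel $k(z)=\prod_i\theta^{\delta_i(z_i)}$ is $G$-translation-invariant, and its Fourier multiplier factorises as $\widehat k(\xi)=\prod_{i=1}^{d}\widehat{k_i}(\xi_i)$, where $\widehat{k_i}$ is the (real, since the sequence is even) discrete Fourier transform of $m\mapsto\theta^{\min(m,n_i-m)}$ on $\mathbf{Z}/n_i$. Writing a configuration as its $0/1$ indicator $\chi$ and expanding over characters, the total energy is a fixed constant plus $|G|^{-1}\sum_\xi\widehat k(\xi)|\widehat\chi(\xi)|^2$; here $\widehat\chi(0)=N$ and $\sum_\xi|\widehat\chi(\xi)|^2$ are both fixed (Parseval), so minimising the energy amounts to putting all of the mass $\sum_{\xi\ne0}|\widehat\chi(\xi)|^2$ onto the characters $\xi\ne0$ that minimise $\widehat k$. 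A checkerboard has indicator $\tfrac{1}{2}(1\pm\epsilon)$ with $\epsilon$ the character $\xi^\star=(n_1/2,\dots,n_d/2)$, so its Fourier support is exactly $\{0,\xi^\star\}$; and conversely the only $0/1$ functions on $G$ of sum $N$ with Fourier support inside $\{0,\xi^\star\}$ are those two. Thus the whole conjecture reduces to the claim that $\xi^\star$ is the \emph{unique} minimiser of $\widehat k$ over $\widehat G\setminus\{0\}$.

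\textbf{The crux, and the main obstacle.} By the product formula this claim follows once one shows that for each $n_i\in\{2\}\cup4\mathbf{Z}$ and each $\theta\in(0,1)$ the real number $\widehat{k_i}(n_i/2)=\sum_{m=0}^{n_i-1}(-1)^m\theta^{\min(m,n_i-m)}$ is \emph{strictly positive} and is the \emph{unique} minimum of $j\mapsto\widehat{k_i}(j)$ over all $j\in\mathbf{Z}/n_i$: then for any $\xi\ne0,\xi^\star$ some coordinate has $\xi_i\ne n_i/2$, making the $i$-th factor strictly larger than $\widehat{k_i}(n_i/2)$ while every factor is $\ge$ the corresponding factor and all are positive, so $\widehat k(\xi)>\widehat k(\xi^\star)$. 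The numbers $\widehat{k_i}(j)$ admit closed forms as products of cyclotomic-type factors in $\theta$ (for instance $(1+\theta)^2,\,1-\theta^2,\,(1-\theta)^2,\,1-\theta^2$ for $n_i=4$), and I would establish positivity and the extremal property by producing such factorisations and comparing the resulting products across $j$. I expect this last step to be the genuine difficulty: the analogous statement is \emph{false} for $n_i\equiv2\pmod4$ --- already for $n_i=6$ and small $\theta$ the minimiser of $\widehat{k_i}$ leaves $n_i/2$ --- so the argument must really exploit divisibility by $4$ (or the triviality of the case $n_i=2$), and the sign bookkeeping and the comparison must be carried out uniformly in $\theta$; removing the arithmetic hypotheses on the $n_i$, and with them proving the conjecture in full, is exactly what this obstruction prevents.
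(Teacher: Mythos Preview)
Your proposal is essentially the paper's own argument. The statement is a \emph{conjecture} that the paper does not prove in full; its strongest result (Theorem~\ref{thm:Fourfolds}) covers precisely the case $n_i\in\{2\}\cup 4\ZZ$, and its proof proceeds exactly as you outline: reduce $\Emax$ to $\Etot$ via the coset property of checkerboards, use Bernstein's theorem to reduce completely monotonic $f$ to exponential kernels $a^{-\delta}$ (your $\theta^{\delta}$ with $\theta=a^{-1}$), factorise the Fourier transform across coordinates, and then verify by a geometric-series computation that each one-dimensional factor is positive and uniquely minimised at $\zeta=-1$ when $n_i=2$ or $4\mid n_i$. You have also correctly identified the obstruction the paper records in its remark after the proof: for $n_i\equiv 2\pmod 4$ with $n_i\geq 6$ the one-dimensional minimiser can leave $-1$, so the factorisation argument breaks down. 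One small slip: for $n_i=6$ the failure occurs for $\theta$ \emph{close to $1$} (equivalently $a$ close to $1$), not for small $\theta$; a direct computation gives $\widehat{k_i}(3)-\widehat{k_i}(2)=(1-\theta)\theta(2\theta-1)$, which is positive exactly when $\theta>1/2$.
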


\jan{Of course, the validity of this conjecture depends on what
{\em reasonable} means. It will certainly imply
{\em decreasing}, but more conditions will be necessary.}

For the example of wireless \jan{networks} the Checkerboard Conjecture
has the following implication. When the threshold $T$ \jan{is}
increased slowly, an increasing number of users is allowed to be active
simultaneously, and there will be a critical value $T=T^*$ at which
this number becomes precisely $n/2$. At that critical threshold,
we then know \jan{that $n/2$ active users can only be arranged in
a checkerboard pattern}---a phenomenon that we have first observed
in simulations of wireless networks. For a large toric grid it is
of course infeasible
to prove the Checkerboard Conjecture by exhaustive search through all
${\binom{n}{N}}$ possible configurations.

\begin{figure}
  \centering
  \includegraphics[width=1.5in,keepaspectratio]{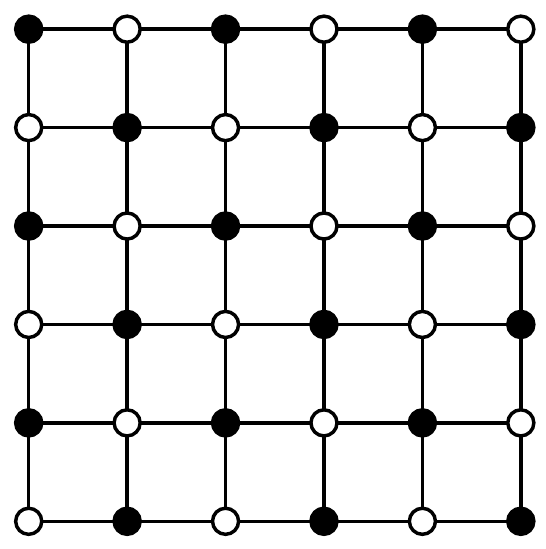}
  \caption{A $6\times6$-grid, the black and white points denote the checkerboard configurations.}
  \label{square2}
\end{figure}

\begin{figure}
  \centering
  \includegraphics[width=3in,keepaspectratio]{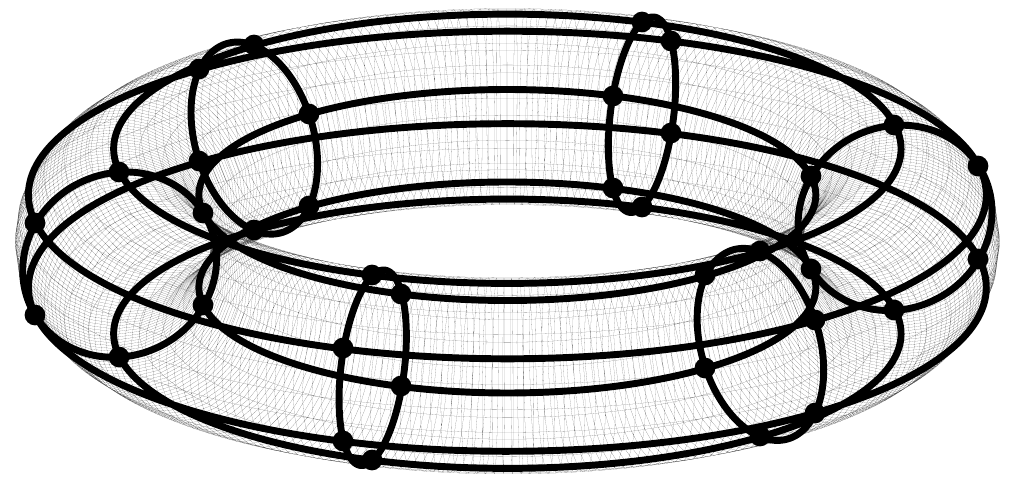}
  \caption{An embedding of a $6\times6$-grid on a $2$-dimensional torus.}
  \label{torus}
\end{figure}

\jan{In this paper we prove the Checkerboard Conjecture in a large number of
special cases. We
first derive a continuous relaxation of our energy minimisation
problem in Section~\ref{sec:Framework}.} Then, as a first \jan{(and
known)} special case, we \jan{prove} the Checkerboard Conjecture for the
one-dimensional toric grid and $f(x)=1/x$ in Section~\ref{sec:Circle}. In
Section~\ref{sec:Hypercube} we prove the Checkerboard Conjecture
for the toric grid that has size two in any dimension and functions
\jan{$f$} whose $k$-th forward difference has sign $(-1)^k$.
\jan{Section~\ref{sec:Four} then gives our strongest result (which however does not
immediately imply any of the previous two results): a proof
for the Checkerboard Conjecture in the case where each $n_i$ is either two or
a multiple of four}, now assuming that \jan{$f$} is \jan{completely
monotonic}; see Section~\ref{sec:Four} for this notion. Finally, \jan{in
Section~\ref{sec:Open} we discuss} some problems open for future research.

\section{Framework} \label{sec:Framework}
In this section we formulate a general version of our energy minimisation
problem, which belongs to the realm of discrete optimisation. We then
derive a continuous relaxation of that problem. If \jan{all} optimal solutions
of the relaxation happen to be \jan{feasible} for the discrete
problem, then they are also the optimal solutions for the discrete
problem. This \jan{is} our strategy for proving special cases
of the Checkerboard Conjecture in the following sections.

Let $d$ be a positive integer, \jan{the {\em dimension}}
of our toric grid. Let $n_1,\ldots,n_d$ be positive integers, and set
$G:=\prod_{i=1}^d (\ZZ/n_i)$, a finite Abelian group. Let
$\Delta:=\{(g,g) \mid g \in G\}$ be the
diagonal in $G \times G$.  Let $u$ be a function $(G \times G) \setminus
\Delta \to \RR$ that is {\em symmetric} ($u(g,h)=u(h,g)$ for all $g,h \in
G$) and {\em $G$-invariant} ($u(k+g,k+h)=u(g,h)$ for all $g,h,k \in G$).
Later we will add the restriction on $u$ that it is a suitable (and in
particular decreasing) function of some $G$-invariant distance on $G$,
so that we may think of $u(g,h)$ as a repelling force between particles
located at $g$ and $h$. But for deriving our relaxation this restriction
is not needed.

Next let $p$ be a natural number less than or equal to
$|G|$. \jan{Let $S$ be a subset} of $G$ of cardinality $p$;
\jan{this is the set of locations of our particles.
Given} $h \in S$ we define
\[ E_h(S):=\sum_{g \in S \setminus \{h\}} u(h,g), \]
called the {\em energy of $S$ \jan{experienced by} $h$}, and
\[ \Emax(S):=\max_{h \in S} E_h(S), \]
called the {\em maximal energy} of $S$.  \jan{The Checkerboard Conjecture
concerns the following optimisation problem with $p=|G|/2$.}

\begin{prb}[Maximal energy minimisation]
Minimise the maximal energy $\Emax(S)$ over all subsets $S \subseteq G$
of cardinality $p$.
\end{prb}

Further we define
\[ \Etot(S):=\sum_{h \in S} E_h(S), \]
called the {\em total energy} of $S$. We can now formulate a second optimisation problem.

\begin{prb}[Total energy minimisation]
Minimise the total energy $\Etot(S)$ over all subsets $S \subseteq G$
of cardinality $p$.
\end{prb}

An important relation between these two optimisation \jan{problems} is
the following. If a set $S^*$ minimises the total energy and happens
to have the property that $E_h(S^*)=E_g(S^*)$ for all $h,g \in S^*$,
then $S^*$ also minimises the maximal energy---indeed, any $S$ with
$\Emax(S) < \Emax(S^*)$ would necessarily have $\Etot(S) < \Etot(S^*)$.
\jan{The condition that $E_h(S^*)$ does not depend on $h \in S^*$ is
satisfied}, in particular, when $S^*$ is a coset of a subgroup \jan{$G'$
of $G$. Indeed, the set $S^*\setminus \{h\}$ is then equal to $G'$ for all $h \in S^*$,
and hence $E_h(S)=\sum_{g \in G'} u(h,h+g)=\sum_{g \in G'} u(0,g)$,
independently of $h \in S^*$, by the $G$-invariance of $u$.} Checkerboard
\jan{patterns} $S$ are examples of cosets. In view of this
relation between maximal energy minimisation and total
energy minimisation, most of
our paper \jan{focuses} on total energy minimisation.

We \jan{proceed to derive a continuous relaxation},
\jan{which} uses the space $V=\RR^G$ of real-valued functions on $G$. We
\jan{represent} a subset $S \subseteq G$ by its {\em characteristic vector}
$x_S \in V$ defined by
\[ x_S(g)=\begin{cases}
1 & \text{if $g \in S$, and}\\
0 & \text{if $g \not \in S$}.
\end{cases}
\]
The space $V$ has two important additional structures. First, it is
equipped with a $G$-action defined by $(g x)(h)=x(h-g)$ for $g,h \in G$
and $x \in V$; \jan{here the minus sign is customary but not strictly
necessary}.
Second, it has a natural \jan{inner product $(x|y):=\sum_{g \in G}
x(g)y(g)$ that is $G$-invariant in the sense that $(hx|hy)=(x|y)$ for
all $h \in G$}.

\jan{Now we define the key quantity in this paper: the {\em energy kernel}
\[ A:=\sum_{g \in G, g \neq 0} u(g,0) g, \]
which lives in the group algebra $\RR G$ and acts
by a linear map on $V$:
\[ (Ax)(h)=\sum_{g \in G, g \neq 0} u(g,0) x(h-g). \]
Alternatively, one may think of $A$ as the symmetric $G \times G$-matrix
whose $(g,h)$-entry equals $u(g,h)$, and perhaps this conceptually
simplifies the following computations.

For any subset $S$ of $G$ we have
\begin{align*}
(x_S|Ax_S) &= \sum_{h \in G} x_S(h) (Ax_S)(h)
= \sum_{h \in S} (Ax_S)(h)\
= \sum_{h \in S} \sum_{g \in G, g \neq 0} u(g,0) x_S(h-g)\\
&= \sum_{h \in S} \sum_{g \in G, g \neq h} u(h-g,0) x_S(g)
= \sum_{h \in S} \sum_{g \in S, g \neq h} u(h-g,0) \\
&= \sum_{h \in S} \sum_{g \in S, g \neq h} u(h,g)
= \Etot(x_S).
\end{align*}

Thus we have identified the total energy of a set $S$ as the inner
product of $x_S$ with its image under the energy kernel. Moreover,
if $S$ has cardinality $p$, then $x_S$ has the further properties
$(x_S|x_S)=(x_S|\one)=p$, where $\one$ is the all-one vector in $V$.}
This motivates the following optimisation problem.

\begin{prb}[Fractional energy minimisation]
Minimise the ``fractional total energy'' $(x|Ax)$ over all $x \in V$
satisfying the constraints $(x|x)=(x|\one)=p$.
\end{prb}

Note that if we add the additional constraint that $x$ be a $0/1$-vector,
then this problem is just total energy minimisation. Thus a feasible
solution of the relaxation can be thought of as a ``fractional'' solution
to the original problem, though this is slight abuse of language for two
reasons. First, the optimal solution to the problem just stated may very
well have irrational coordinates, so that it is not a
``fraction''. Second,
it may have coordinates that are not in the interval $[0,1]$, so that
it cannot be ``rounded'' to an actual solution of the original problem.
We could have added the additional constraint that all coordinates lie in
that interval. This would have led to a more general {\em quadratically
constrained quadratic program}. However, as we \jan{describe} next, the
formulation just given allows for a simple solution once the eigenvalues
of $A$ are known. But first we discuss an example to make all notions
more concrete.

\begin{ex} \label{ex:4by4a}
Let $d=2$ and $n_1=n_2=4$, so that $G=(\ZZ/4)\times(\ZZ/4)$. We depict
a vector $x \in V$ as a $4 \times 4$-matrix of real numbers, with rows
and columns labelled by the elements $0,1,2,3$ of $\ZZ/4$ and entry
$x(k,l)$ at position $(k,l)$. Here is an example of the action of $G$
on $V$ \jan{(``shift $1$ down and $2$ to the right'')}:
\[
(1,2)
\begin{bmatrix}
1 & 2 & 3 & 4 \\
5 & 6 & 7 & 8 \\
9 & 10 & 11 & 12 \\
13 & 14 & 15 & 16
\end{bmatrix}
=
\begin{bmatrix}
15 & 16 & 13 & 14 \\
3 & 4 & 1 & 2 \\
7 & 8 & 5 & 6 \\
11 & 12 & 9 & 10
\end{bmatrix}.
\]
In this case the Lee distance $\delta$ on $G$ is given by
\[ \delta((j,k),(l,m))= |l-j|+|m-k| \]
where $|a|$ is the smallest non-negative integer in $(a+4\ZZ) \cup
(-a+4\ZZ)$. Define $u(g,h)=\frac{1}{\delta(g,h)}$, so that
particles situated at $g$ and $h$ repel each other with a force inverse
proportional to the Lee distance between $g$ and $h$. Take $p=4$ and
$S=\{(0,0),(0,1),(0,2),(0,3)\}$.  Then the energy kernel $A$ satisfies, for
instance,
\begin{align*}
A x_S&=
A
\begin{bmatrix}
1 & 1 & 1 & 1\\
0 & 0 & 0 & 0\\
0 & 0 & 0 & 0\\
0 & 0 & 0 & 0
\end{bmatrix}\\
&=
\begin{bmatrix}
0 + 1 + \frac{1}{2}  +1  &1+   0  + 1  +\frac{1}{2}&\frac{1}{2}+  1  + 0  + 1  &1+\frac{1}{2} + 1   + 0  \\
1 + \frac{1}{2} + \frac{1}{3}+\frac{1}{2}&\frac{1}{2}+ 1  + \frac{1}{2}+\frac{1}{3}&\frac{1}{3}+ \frac{1}{2} + 1  + \frac{1}{2}&\frac{1}{2}+  \frac{1}{3} + \frac{1}{2} + 1  \\
\frac{1}{2}+\frac{1}{3} + \frac{1}{4}+\frac{1}{3}&\frac{1}{3}+\frac{1}{2} + \frac{1}{3}+\frac{1}{4}&\frac{1}{4}+ \frac{1}{3} + \frac{1}{2}+ \frac{1}{3}&\frac{1}{3}+  \frac{1}{4} + \frac{1}{3} + \frac{1}{2}\\
1 + \frac{1}{2} + \frac{1}{3}+\frac{1}{2}&\frac{1}{2}+ 1  + \frac{1}{2}+\frac{1}{3}&\frac{1}{3}+\frac{1}{2} + 1  + \frac{1}{2} &\frac{1}{2}+  \frac{1}{3} + \frac{1}{2} + 1
\end{bmatrix},
\end{align*}
and we find that $(x|Ax)$ equals $4(0+1+1/2+1)=10$, which is indeed the
total energy of $S$. Note that one could represent $A$ by a symmetric
$16 \times 16$-matrix.
\end{ex}

\jan{The geometric intuition behind what follows is that, to minimise
$(x|Ax)$ over the sphere defined by the constraints $(x|x)=(x|\one)=p$,
we have to maximise the component of $x$ in the eigenspace of $A$
corresponding to the smallest eigenvalue of $A$.  Indeed, since $A$
``is'' a symmetric matrix, or, equivalently, a self-adjoint linear map
with respect to $(.|.)$, we know beforehand that all of its eigenvalues
on $V$ are real. Nevertheless, to determine those eigenvalues it is
convenient first to complexify $V$ to $V_\CC:=\CC \otimes V=\CC^G$,
because this allows us to simultaneously diagonalise all group elements
$g$ in their action on $V$. As a consequence, their linear combination
$A$ is then also diagonalised.

We use basic terminology from the representation theory of finite
Abelian groups, for which we refer to \cite[Part 1]{Serre77}.  Since $G$
is Abelian, $V_\CC$ splits as a direct sum of simultaneous eigenspaces
of all $g \in G$, that is, we have}
\[ V_\CC=\bigoplus_{\chi \in G^\vee} V_{\chi}, \]
where $\chi$ runs over
\jan{the set $G^\vee$ of group homomorphisms $(G,+) \to
(\CC,\cdot)$ (called the {\em character group} of $G$)}, and where
$V_\chi$ is the subspace of $V$ defined as
\[ V_\chi := \{v\in V_\CC \mid g v=\chi(g) v \text{ for all } g \in
G\}. \]
\jan{Distinct $V_\chi$ are orthogonal with respect to the complexification
of $(.|.)$ to a Hermitian inner product, and as $V_\CC$ is
isomorphic to the {\em regular representation} of $G$, each $V_\chi$
is one-dimensional.}
\jan{More explicitly, being a group homomorphism, $\chi \in G^\vee$ is
determined by its values $\zeta_1,\ldots,\zeta_d$ on the generators
$(1,0,\ldots,0), \ldots, (0,\ldots,0,1)$ of $G$. Each $\zeta_i$ is an $n_i$-th root
of unity, and the space $V_\chi$ is spanned by the element of $V_\CC$
whose value at $(g_1,\ldots,g_d)$ equals $\zeta_1^{-g_1}
\cdots \zeta_d^{-g_d}$. Note that this function is just the
multiplicative inverse $\chi^{-1}$ of $\chi$, which equals the complex conjugate
$\overline{\chi}$ since the $\zeta_i$ lie on the unit
circle.}

Since $A$ \jan{is a linear combination of the group elements, it acts
by scalar multiplication on each $V_\chi$, and the scalar equals}
\[ \lambda(\chi):=\sum_{g \neq 0} u(g,0) \chi(g). \]
Note that this is the (discrete) {\em Fourier transform} of the
function $G \to \CC,\ g \mapsto u(g,0)$.
\jan{As noted before, this is} a real number, since
\[ \overline{\lambda(\chi)}=
\sum_{g \neq 0} u(g,0) \overline{\chi(g)}=
\sum_{g \neq 0} u(g,0) \chi(-g)=
\lambda(\chi), \]
\jan{where the second equality follows from the fact that
$\overline{\chi(g)}=\chi(g)^{-1}=\chi(-g)$, and}
where we have used the symmetry and $G$-invariance of $u$ in
the last step.
This computation also shows that \jan{$\lambda(\overline{\chi})$
equals $\lambda(\chi)$}. Thus the real
vector space $V$ splits as a direct sum
\[ V=\bigoplus_{ \{\chi,\overline{\chi}\}}
V_{\{\chi,\overline{\chi}\}}\]
over unordered pairs of a character and its conjugate (which
may coincide), where $V_{\{\chi,\overline{\chi}\}}$ is the
real space of real-valued vectors in the complex space $V_\chi +
V_{\overline{\chi}}$. Here the direct sum is in fact orthogonal, each
summand is one-dimensional or two-dimensional according
\jan{to $\chi$ being
real or non-real}, and each summand has corresponding $A$-eigenvalue
$\lambda(\chi)$. As a special case, the trivial character
$\one$
sending every element of $G$ to $1$ is real, and the vector space
$V_{\{\one\}}$ is spanned by the all-one vector $\one$.
Now we can solve fractional energy minimisation as follows.

\begin{prop} \label{prop:Eigenvalue}
Let $\lambdamin$ be the smallest value of $\lambda(\chi)$ as $\chi$
ranges over the non-trivial characters. Then the set of optimal solutions
of fractional energy minimisation consists of all vectors of
the form $\frac{p}{|G|} \one + y$ where $y$
belongs to the eigenspace of $A$ in $V$ with eigenvalue $\lambdamin$,
\jan{is perpendicular to $\one$},
and satisfies $(y|y)=p-{p^2}/{|G|}$.
\end{prop}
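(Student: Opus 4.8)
The plan is to reduce the constrained quadratic minimisation to an eigenvalue computation by exploiting the orthogonal decomposition $V=\bigoplus_{\{\chi,\overline{\chi}\}} V_{\{\chi,\overline{\chi}\}}$ established above, together with the fact that $A$ acts as the scalar $\lambda(\chi)$ on each summand. First I would write an arbitrary $x\in V$ in this decomposition, separating off the component along $\one$: write $x=c\,\one+y$ with $c\in\RR$ and $y\perp\one$. The constraint $(x|\one)=p$ forces $c|G|=p$, hence $c=p/|G|$, since $(\one|\one)=|G|$ and $(y|\one)=0$. Then the constraint $(x|x)=p$ becomes $c^2|G|+(y|y)=p$, i.e.\ $(y|y)=p-p^2/|G|$; note this is nonnegative because $p\le|G|$, so feasible vectors exist.

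Next I would compute the objective. Since $A\one=\lambda(\one)\one$ and, more importantly, $\one$ is orthogonal to every nontrivial eigenspace while $A$ is self-adjoint, we get $(x|Ax)=c^2(\one|A\one)+(y|Ay)$, the cross terms vanishing because $Ay\perp\one$ (as $A$ preserves the orthogonal complement of $\one$). The first term $c^2\lambda(\one)|G|$ is a constant determined by the constraints, so minimising $(x|Ax)$ is equivalent to minimising $(y|Ay)$ over all $y\perp\one$ with $(y|y)=p-p^2/|G|$. Decomposing $y=\sum y_{\{\chi,\overline\chi\}}$ over the nontrivial pairs, we have $(y|Ay)=\sum_{\{\chi,\overline\chi\}\neq\{\one\}}\lambda(\chi)\,(y_{\{\chi,\overline\chi\}}|y_{\{\chi,\overline\chi\}})$, a weighted average of the values $\lambda(\chi)$ with nonnegative weights summing to $p-p^2/|G|$. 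This weighted average is minimised exactly when all the weight sits on summands with $\lambda(\chi)=\lambdamin$, i.e.\ exactly when $y$ lies in the $\lambdamin$-eigenspace (intersected with the real form $V$, and automatically perpendicular to $\one$); conversely any such $y$ with the right norm achieves the minimum. Reassembling $x=\frac{p}{|G|}\one+y$ gives precisely the claimed description of the optimal set, and one should also remark that such $y$ exists since the relevant eigenspace is nonzero.

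The only genuinely delicate point is the claim that the minimum of a weighted average $\sum w_i\lambda_i$ over weight vectors $w$ with $w_i\ge0$ and $\sum w_i=r$ (fixed) is attained precisely on the set of $w$ supported on $\argmin_i\lambda_i$. This is elementary but deserves a sentence: if $w$ puts positive weight on some $i$ with $\lambda_i>\lambdamin$, moving that weight to an index achieving $\lambdamin$ strictly decreases the sum, so no such $w$ is optimal; and any $w$ supported on the minimising set gives the value $r\,\lambdamin$. I expect the main bookkeeping obstacle—not a conceptual one—to be keeping the real-versus-complex distinction clean: the decomposition indexed by unordered pairs $\{\chi,\overline\chi\}$ is the right real-vector-space statement, the weights $(y_{\{\chi,\overline\chi\}}|y_{\{\chi,\overline\chi\}})$ are the real inner products on those one- or two-dimensional summands, and one must note that $y\perp\one$ is equivalent to $y$ having zero component in $V_{\{\one\}}$ so that the sum really runs over nontrivial characters only. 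Everything else is a direct substitution.
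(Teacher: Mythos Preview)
Your proposal is correct and follows essentially the same route as the paper: split off the $\one$-component to handle the affine constraint, use the orthogonal eigenspace decomposition of $A$ to write $(x|Ax)$ as a constant plus $\sum_{\{\chi,\overline\chi\}\neq\{\one\}}\lambda(\chi)\,\|y_{\{\chi,\overline\chi\}}\|^2$, and conclude by the elementary fact that a nonnegative weighted sum with fixed total weight is minimised exactly when all weight lies on the smallest coefficient. Your write-up is somewhat more explicit about the cross-term vanishing and the real/complex bookkeeping than the paper's, but there is no substantive difference in method.
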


Note that the condition that $y$ be perpendicular to $\one$ is automatic
if $\lambdamin<\lambda(\one)$, because \jan{eigenvectors
corresponding to distinct eigenvalues are
perpendicular}. In this case the set of optimal solutions
forms a sphere \jan{whose} dimension \jan{equals} the multiplicity of $\lambdamin$ minus one,
embedded in the affine hyperplane where $(x|\jan{\one})=p$. If $\lambdamin$
happens to equal $\lambda(\one)$, then the dimension of that sphere
of optimal solutions \jan{equals} the multiplicity of $\lambdamin$ minus two.

\begin{proof}
Consider any feasible solution $x$. The condition $(x|\one)=p$ means that
$x$ can be written as $\frac{p}{|G|} \one$ plus $y$ with $y$ in the direct
sum of all $V_{\{\chi,\overline{\chi}\}}$ with $\chi$ non-trivial. The
condition that $(x|x)=p$ translates in the above condition on the norm
of $y$. Let $y_{\{\chi,\overline{\chi}\}}$ be the component
of $y$ in $V_{\{\chi,\overline{\chi}\}}$. Then, by
orthogonality,
\[ (x|Ax)=(\frac{p}{|G|} \one + y | A(\frac{p}{|G|} \one + y))
=
\lambda(\one) \frac{p^2}{|G|} +
\sum_{\{\chi,\overline{\chi}\} \text{ non-trivial}}
\lambda(\chi) ||y_{\{\chi,\overline{\chi}\}}||^2.
\]
For this expression to be minimal among all $y$ with squared
norm $p-{p^2}/{|G|}$ it is necessary and sufficient that
all components $y_{\{\chi,\overline{\chi}\}}$ for which
$\lambda(\chi)$ is not equal to $\lambdamin$ are zero.
This proves the proposition.
\end{proof}

\begin{ex}[Continuation of Example~\ref{ex:4by4a}]
\label{ex:4by4b}
In this case \jan{a} character $\chi \in G^\vee$ is determined by its
values $\zeta_1:=\chi(1,0)$ and $\zeta_2:=\chi(0,1)$, and these complex
numbers must satisfy $\zeta_1^4=\zeta_2^4=1$. Conversely, any such pair
determines a character, \jan{which maps the pair
$(g_1,g_2) \in (\ZZ/4)^2$ to $\zeta_1^{g_1} \zeta_2^{g_2}$}.
The eigenvalue $\lambda(1,i)$ of $A$ equals
(reading elements row-wise)
\[ (0 + i - \frac{1}{2} - i) + (1 + \frac{i}{2} - \frac{1}{3} - \frac{i}{2}) + (\frac{1}{2} + \frac{i}{3} - \frac{1}{4} - \frac{i}{3}) +
(1 + \frac{i}{2} - \frac{1}{3} - \frac{i}{2}) =
\frac{13}{12}.
\]
The eigenvalues of the energy
kernel $A$ are as follows.
\begin{center}
\begin{tabular}{cc|cccc}
\multicolumn{2}{c}{} & \multicolumn{4}{c}{$\zeta_2$}\\
\multicolumn{2}{c|}{$12 \cdot \lambda(\zeta_1,\zeta_2)$}& $1$ & $i$ &
$-1$ & $-i$\\
\hline
\multirow{4}{*}{$\zeta_1$}
& $1$  & 103 & 13 & -9 & 13 \\
& $i$  & 13  & -9 & -19& -9 \\
& $-1$ & -9  &-19 & -25& -19\\
& $-i$ & 13  & -9 & -19&-9
\end{tabular}\\
\end{center}
This shows that $\lambda(1,1)$ is the maximal eigenvalue, with
a one-dimensional eigenspace spanned by $\one$; and
$\lambda(-1,-1)$ is the minimal eigenvalue, with a one-dimensional
eigenspace spanned by
\[
z=\jan{((g_1,g_2) \mapsto (-1)^{g_1} \cdot (-1)^{g_2})}=
\begin{bmatrix}
1 & -1 & 1 & -1\\
-1&  1 & -1&  1\\
1 & -1 & 1 & -1\\
-1&  1 & -1&  1
\end{bmatrix}.
\]
Hence fractional energy minimisation has two optimal solutions (forming
a zero-dimensional sphere). For $p=4$ these are $\frac{1}{4} \one \pm
\frac{\sqrt{3}}{4}z$. Both of these have coordinates in $[0,1]$, but
neither of these is integral or even rational.  Of course, a brute-force
\jan{computer}
search over all configurations of $p=4$ particles on $G$ is possible in
this case. Such a search reveals that, up to translations, the following
three characteristic vectors are the ones minimising the total energy:
\[
\begin{bmatrix}
1 & 0 & 1 & 0\\
0 & 0 & 0 & 0\\
0 & 1 & 0 & 1\\
0 & 0 & 0 & 0
\end{bmatrix},\
\begin{bmatrix}
1 & 0 & 0 & 0\\
0 & 0 & 1 & 0\\
1 & 0 & 0 & 0\\
0 & 0 & 1 & 0
\end{bmatrix},\
\begin{bmatrix}
1 & 0 & 0 & 0\\
0 & 1 & 0 & 0\\
0 & 0 & 0 & 1\\
0 & 0 & 1 & 0
\end{bmatrix}.
\]
Note that the first two are each other's images under swapping rows and
columns, and that they represent the cyclic subgroups of $G$ generated
by $(2,1)$ and $(1,2)$, respectively. The last one is not a coset of a
subgroup, but still has the property that the energies \jan{experienced
by} all particles are the same. This means that these configurations
also minimise the maximal energy.  We conclude that for $p=4$ the optimal
solution to the fractional energy minimisation problem has little bearing
on the original discrete optimisation problem, \jan{and it would be very
interesting to find a strengthening of our relaxation that does give
the optimal solutions for $p=4$.}

On the other hand, \jan{consider now $p=8=|G|/2$.} Then the optimal
solutions for fractional energy minimisation are $\frac{1}{2} \one \pm
\frac{1}{2} z$, and these are exactly the characteristic vectors of the
two checkerboard patterns. \jan{Since fractional energy minimisation is a
relaxation of total energy minimisation, this proves that the checkerboard
patterns also minimise total energy $\Etot$. And since they are cosets
of a subgroup, the argument given after the introduction of $\Etot$
shows that the checkerboard patterns are also the (unique) minimisers of
$\Emax$. We have thus {\em proved} our first instance of the Checkerboard
Conjecture. In the following three sections we follow exactly the
same strategy: we determine the minimal eigenvalue of the energy kernel
to determine the optimal solutions of fractional energy minimisation,
and when these are the checkerboard patterns, then we are done. This
works in quite a number of cases---in particular, when each $n_i$ is
either two or a multiple of four, see
Section~\ref{sec:Four}---but not in general,
see Section~\ref{sec:Open}}.
\end{ex}

We conclude this section by relating our techniques and results to
existing literature on energy minimisation. The most relevant reference
is \cite{Cohn07}, which is primarily concerned with energy minimisation of repelling
particles on high-dimensional spheres. There are close
parallels between our approach and the techniques there. In particular,
both papers make essential use of the representation theory of the natural
symmetry group of the problem. In our case this is the group $G$ itself,
while in the spherical case it is the orthogonal group. The fact that
$G$ is Abelian (and hence diagonalisable) makes the representation
theory much easier than that required for the spherical problem. On
the other hand, there are also important distinctions. First of all,
\cite{Cohn07} makes essential use of the fact that the symmetry group acts
distance-transitively, while our group $G$ acts only simply transitively
on itself. Second, while \cite{Cohn07} deals with continuous optimisation
problems, our optimisation problem is inherently of a discrete nature. It
could be strengthened into a continuous problem by letting $\frac{n_1
\cdots n_d}{2}$ particles move on the continuous torus $\RR^d/(n_1 \ZZ
\times \cdots \times n_d\ZZ)$. If the energy-minimising configurations for
that problem are (unique up to translation and) equal to our checkerboard
configurations, then that implies our results (provided that one uses
the $1$-norm, which is the continuous analogue of the Lee distance).
It would be interesting to know if the techniques from \cite{Cohn07}
can be used to prove such a stronger statement. Note that the pre-image
in $\RR^d$ of the particles' positions form a periodic set. The paper
\cite{Cohn07} does discuss optimisation problems for such sets, but in the
context where the lattice of periodicity is allowed to vary, as well. In
our case, the lattice is fixed to $n_1 \ZZ \times \cdots \times n_d\ZZ$.

Our minimisation problem is also reminiscent of certain instances of the
{\em quadratic assignment problem} that are studied in \cite{Burkard98}. In that
paper it is proved \jan{that the class of those instances is
NP-complete}. In view of this, minimality of checkerboard
configurations is an interesting result.  On the other hand, in
\cite{Burkard98} the corresponding {\em maximisation} problem is proved
to be solvable in polynomial time. In our setting, this would correspond
to taking for $u$ some {\em increasing} function of the distance, which
has an interpretation in terms of {\em attracting} particles rather than
repelling ones. This problem, in turn, resembles the problem of {\em how
to build an optimal city} \cite{Bender04}. Our attention,
however, is \jan{focused} entirely on the setting where $u$ is some {\em decreasing} function of the Lee distance.

\section{\jan{The one-dimensional case}} \label{sec:Circle}
The first special case \jan{that} we consider is the one-dimensional
toric grid. That is, we consider $G=(\ZZ/n)$, with $n$ a
positive integer, and define a graph on $G$ by connecting elements that differ by $\pm 1$. One can think of this as $n$ equally spaced points on a circle.
Further we define the function $u:G \times G \setminus \Delta \to \RR_+$ as $u(g,h):=f(\delta(g,h))$, where $\delta(g,h)$ denotes the Lee distance between $g$ and $h$ and $f:\RR_+ \to \RR_+$ is a decreasing strictly convex function. In~\cite{Gotz03} the continuous problem of letting particles move on a circle is considered and it is shown that total energy is minimised if and only if the particles are equally spaced. This validates the Checkerboard Conjecture for this special case.

The proof in~\cite{Gotz03} is based on a direct comparison
between the total energy in the configuration with equally
spaced particles (i.e.~the checkerboard configuration and
its translations) and the total energy in any other
configuration. To get a better feeling for the technique of
Section~\ref{sec:Framework} we \jan{now} assume $f(x)=x^{-1}$ and prove that fractional energy is uniquely minimised as well by the checkerboard vectors. That is, we will prove the following result.
\begin{thm} \label{thm:Circle}
Assume that $n$ is a multiple of $2$. Then the fractional energy $(x|Ax)$ over all $x \in V=\RR^G$ with
$(x|x)=(x|\one)=n/2$ is minimised by the characteristic vectors
$x_{S_\text{even}}$ and $x_{S_\text{odd}}$ of the checkerboard
configurations
\[ S_\text{even/odd}:=\{g \in G \mid g \text{
even/odd}\}, \]
and only by these. As a consequence, both the total energy and the maximal
energy among all subsets $S \subseteq G$ with $|S|=n/2$ are uniquely
minimised by $S_\text{even}$ and $S_{\text{odd}}$.
\end{thm}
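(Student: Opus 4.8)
The plan is to run exactly the strategy announced in Section~\ref{sec:Framework}: apply Proposition~\ref{prop:Eigenvalue} with $p=n/2$ and $|G|=n$, so that everything reduces to pinning down $\lambdamin$ and the corresponding eigenspace of the energy kernel $A$. Write $n=2m$, let $\zeta:=\e^{2\pi i/n}$, and for $j\in\ZZ/n$ let $\chi_j\in G^\vee$ be the character sending the generator $1$ to $\zeta^{j}$. Note that $x_{S_\text{even}}=\tfrac12\one+\tfrac12 z$ and $x_{S_\text{odd}}=\tfrac12\one-\tfrac12 z$, where $z\in V$ is given by $z(g)=(-1)^g$; this $z$ spans the eigenspace $V_{\{\chi_m\}}$, which is one-dimensional because $\chi_m$ is real ($\zeta^m=-1$). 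Hence, by Proposition~\ref{prop:Eigenvalue}, the theorem follows once we show that $\lambdamin=\lambda(\chi_m)$ and that this value is attained only at $\chi_m$: the vector $y$ must then lie in $\RR z$, and $(y|y)=n/2-n/4=n/4=\|\tfrac12 z\|^2$ forces $y=\pm\tfrac12 z$, giving precisely the two checkerboard vectors.

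Next I would compute the eigenvalues. Since $\lambda(\chi_j)$ is real (as shown in Section~\ref{sec:Framework}) it equals $\sum_{g=1}^{n-1}\cos(2\pi jg/n)/\delta(g)$, and pairing $g$ with $n-g$ (using $\delta(g)=\delta(n-g)$ and $\cos(2\pi j(n-g)/n)=\cos(2\pi jg/n)$) gives
\[
\lambda(\chi_j)=2\sum_{g=1}^{m-1}\frac{\cos(\pi jg/m)}{g}+\frac{(-1)^j}{m}=F\!\left(\tfrac{\pi j}{m}\right),
\qquad
F(\varphi):=2\sum_{g=1}^{m-1}\frac{\cos(g\varphi)}{g}+\frac{\cos(m\varphi)}{m},
\]
so that $\lambda(\chi_j)=F(\pi j/m)$ for $j=0,1,\dots,m$, and $\lambda(\chi_j)=\lambda(\chi_{n-j})$ for the remaining $j$.

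The crux is to prove that $F$ is strictly decreasing on $[0,\pi]$. Differentiating, and applying the standard identity $\sum_{g=1}^{N}\sin(g\varphi)=\frac{\sin(N\varphi/2)\sin((N+1)\varphi/2)}{\sin(\varphi/2)}$ for $N=m-1$ and $N=m$ followed by a sum-to-product step, I expect to obtain
\[
F'(\varphi)=-2\sum_{g=1}^{m-1}\sin(g\varphi)-\sin(m\varphi)=-\frac{2\sin^2(m\varphi/2)\cos(\varphi/2)}{\sin(\varphi/2)},
\]
which is $\leq 0$ on $(0,\pi)$ and vanishes only at the finitely many points $\varphi\in\{2\pi k/m:\ 1\leq k<m/2\}$; a continuous function with such a derivative is strictly decreasing on $[0,\pi]$. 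This is the step I expect to be the main obstacle — not because it is deep, but because it is the only place where the specific choice $f(x)=1/x$ (i.e.\ the specific weights $1/\delta(g)$) is used, and the closed form for $F'$ must be got exactly right. It then follows that $\lambda(\chi_0)>\lambda(\chi_1)>\dots>\lambda(\chi_m)$, whence $\lambdamin=\lambda(\chi_m)$, attained only at $\chi_m$, and moreover $\lambdamin<\lambda(\chi_0)=\lambda(\one)$.

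Finally I would collect the consequences. By Proposition~\ref{prop:Eigenvalue} the unique minimisers of fractional energy are $\tfrac12\one\pm\tfrac12 z$, that is, exactly $x_{S_\text{even}}$ and $x_{S_\text{odd}}$. Since these are $0/1$-vectors and fractional energy minimisation is a relaxation of total energy minimisation, $S_\text{even}$ and $S_\text{odd}$ are the only minimisers of $\Etot$ among subsets of size $n/2$. Both are cosets of the subgroup $2(\ZZ/n)$, so by the remark following the definition of $\Etot$ each particle experiences the same energy in them; hence they also minimise $\Emax$. For uniqueness of the $\Emax$-minimisers, any $S$ with $\Emax(S)\leq\Emax(S_\text{even})$ satisfies $\Etot(S)\leq|S|\,\Emax(S)\leq|S_\text{even}|\,\Emax(S_\text{even})=\Etot(S_\text{even})$, so $S$ minimises $\Etot$ and is therefore a checkerboard configuration. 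This completes the proof.
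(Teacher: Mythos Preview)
Your proposal is correct and follows essentially the same route as the paper: reduce to Proposition~\ref{prop:Eigenvalue}, interpolate the discrete eigenvalue function to a smooth function of a real variable, differentiate, and use a closed form for the trigonometric sum to show the derivative has the right sign. Your parametrisation by $\varphi=\pi j/m$ and the resulting expression $F'(\varphi)=-2\sin^2(m\varphi/2)\cos(\varphi/2)/\sin(\varphi/2)$ is exactly equivalent (via $\cos(\pi x)-1=-2\sin^2(\pi x/2)$) to the paper's formula $\kappa'(x)=\tfrac{2\pi}{n}(\cos(\pi x)-1)\cot(\pi x/n)$, and your argument for uniqueness of the $\Emax$-minimisers is the standard one already implicit in Section~\ref{sec:Framework}.
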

By Proposition~\ref{prop:Eigenvalue} we know that set of optimal solutions of the fractional energy minimisation problem is determined by the smallest eigenvalue of the energy kernel $A$. As discussed in Section~\ref{sec:Framework} the eigenvalues are functions on the dual group $G^\vee=C_n$, with $C_n$ the group of complex $n$-th roots of unity. And the fractional energy is uniquely minimised by the checkerboard vectors if the eigenvalue has a unique minimum at $-1$, which lives in $G^\vee$ as $n$ is even. Thus the following proposition implies Theorem~\ref{thm:Circle}

\begin{prop}
The function $\lambda:G^\vee \to \RR$,
\[
\lambda(\chi) = \sum_{g \neq 0} \delta(g,0)^{-1} \chi^g
\]
has a unique minimum at $\chi=-1$.
\end{prop}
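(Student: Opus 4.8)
The plan is to identify $\lambda(\chi)$ explicitly as a function of $\chi \in C_n$. Writing $\chi = \zeta$ for an $n$-th root of unity and using that $\delta(g,0) = \min(g, n-g)$ for $g \in \{0,1,\dots,n-1\}$, I would split the sum $\sum_{g \neq 0} \delta(g,0)^{-1}\zeta^g$ by pairing $g$ with $n-g$. Since $\zeta^{n-g} = \overline{\zeta^g}$ on the unit circle, the two terms of a pair combine to $\frac{1}{k}(\zeta^g + \zeta^{-g}) = \frac{2}{k}\cos(g\theta)$ where $\zeta = e^{i\theta}$ and $k = \delta(g,0)$. So up to the (harmless, $\chi$-independent) contribution of the self-paired term $g = n/2$ when present, $\lambda(\zeta)$ is a real trigonometric polynomial of the form $\sum_{k\geq 1} c_k \cos(k\theta)$ with $c_k > 0$. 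The characters correspond to $\theta \in \{2\pi j/n : j = 0,\dots,n-1\}$, and $\chi = -1$ corresponds to $\theta = \pi$.

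The key analytic input is that $\cos(k\theta)$, viewed as a function of the points $e^{i\theta}$ on the circle, is ``most negative'' at $\theta = \pi$ only when $k$ is odd, and for even $k$ it actually attains its maximum there. So a naive bound $\cos(k\theta) \geq -1 = \cos(k\pi)$ fails for even $k$. The right move is instead to exploit the specific coefficients $c_k$ coming from $1/\delta$. I would write $\lambda(\zeta)$ in closed form: summing the geometric-like series $\sum_{g=1}^{n-1} \frac{\zeta^g}{\delta(g,0)}$ should produce an expression involving a partial sum of $\sum_{k} \frac{\zeta^k + \zeta^{-k}}{k}$, i.e. a truncation of $-\log|1 - \zeta|^2 = -2\log|1-\zeta|$. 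Concretely, I expect $\lambda(\zeta) - \lambda(1)$ to be expressible, after telescoping, in terms of $\sum_{k=1}^{n/2} \frac{\cos k\theta}{k}$ plus an elementary correction; and the known Fourier series $\sum_{k\geq 1}\frac{\cos k\theta}{k} = -\log(2\sin(\theta/2))$ for $0 < \theta < 2\pi$ suggests the continuous analogue is minimised at $\theta = \pi$, where $2\sin(\theta/2) = 2$ is maximal. The discrete statement should then follow by showing the truncated sum inherits this behaviour on the finite set $\theta = 2\pi j/n$.

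A cleaner route, which I would try first, is to use the convolution/positive-definiteness structure directly. The function $g \mapsto 1/\delta(g,0)$ on $\mathbb{Z}/n$ can be written as a nonnegative combination of indicator-type functions whose Fourier transforms are manifestly minimised at $-1$; more precisely, using Abel summation, $\frac{1}{\delta(g,0)} = \sum_{k \geq \delta(g,0)} \left(\frac{1}{k} - \frac{1}{k+1}\right)$ rewrites $\lambda(\chi) = \sum_{k=1}^{\lfloor n/2\rfloor} \left(\frac{1}{k}-\frac{1}{k+1}\right) \Sigma_k(\chi)$ where $\Sigma_k(\chi) = \sum_{0 < \delta(g,0) \leq k} \chi^g$ is the Fourier transform of the indicator of a ``Lee ball'' of radius $k$ (minus the origin). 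Each $\Sigma_k(\chi)$ is a Dirichlet-kernel-type sum $\frac{\sin((2k+1)\theta/2)}{\sin(\theta/2)} - 1$ (with the obvious modification at $g = n/2$), and I would verify that each such kernel, evaluated over the allowed roots of unity, is minimised uniquely at $\theta = \pi$ — this is where the arithmetic progression structure of the Lee ball and the evenness of $n$ enter. Since all the weights $\frac{1}{k} - \frac{1}{k+1}$ are strictly positive, a common strict minimiser of every $\Sigma_k$ is the unique strict minimiser of $\lambda$.

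The main obstacle I anticipate is precisely the uniqueness at $\chi = -1$: showing not just $\lambda(\chi) \geq \lambda(-1)$ but strict inequality for every other character. The Dirichlet kernel $\frac{\sin((2k+1)\theta/2)}{\sin(\theta/2)}$ does not decrease monotonically to $\theta = \pi$, so controlling it at the finitely many points $\theta = 2\pi j/n$ for all $k$ simultaneously requires care — likely a separate elementary argument bounding the Dirichlet kernel below by its value at $\pi$ on the discrete grid, using that $n$ is even (so that $\theta = \pi$ is on the grid and the grid is symmetric about it). If that uniform bound on each kernel proves awkward, the fallback is the closed-form logarithmic expression from the second paragraph combined with convexity/monotonicity of $-\log\sin$ on $(0,\pi)$ together with a discrete error estimate for the truncation.
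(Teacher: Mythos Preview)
Your Abel-summation route has a genuine gap: the individual Dirichlet-type sums $\Sigma_k(\chi)=\sum_{0<\delta(g,0)\le k}\chi(g)$ are \emph{not} each minimised at $\chi=-1$ on the discrete grid, so there is no ``separate elementary argument'' to be found. Already for $n=6$ and $k=2$ one has $\Sigma_2(e^{i\theta})=2\cos\theta+2\cos 2\theta$, which equals $0$ at $\theta=\pi$ but $-2$ at $\theta=2\pi/3$, and $2\pi/3$ is on the grid. Thus the inequality $\Sigma_k(\chi)\ge\Sigma_k(-1)$ fails outright, not merely up to strictness; the full sum $\lambda$ is minimised at $-1$ only because the positive weights $\tfrac{1}{k}-\tfrac{1}{k+1}$ compensate across $k$, and your decomposition discards precisely that interaction. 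Your fallback via the series for $-\log\bigl(2\sin(\theta/2)\bigr)$ plus a truncation estimate is closer in spirit to something that could work, but as written it is a plan rather than a proof, and the ``discrete error estimate'' is where all the content would lie.

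The paper sidesteps these difficulties with a direct calculus argument on the continuous interpolant. Writing $\lambda(e^{2\pi i j/n})=2\sum_{k=1}^{n/2-1}\tfrac{1}{k}\cos\!\bigl(\tfrac{2\pi jk}{n}\bigr)+\tfrac{2}{n}(-1)^j$, one replaces $j$ by a real variable $x\in(0,n)$ (taking the real part of $(-1)^x$) and differentiates. The key point is that the derivative involves $\sum_{k=1}^{n/2-1}\sin\!\bigl(\tfrac{2\pi xk}{n}\bigr)$, and---unlike its cosine counterpart---this sum telescopes in closed form via $\sum_{k=1}^{m}\sin(ky)=\dfrac{\cos(y/2)-\cos((m+\tfrac12)y)}{2\sin(y/2)}$. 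After simplification the derivative becomes $\tfrac{2\pi}{n}\bigl(\cos(\pi x)-1\bigr)\dfrac{\cos(\pi x/n)}{\sin(\pi x/n)}$, whose sign is nonpositive on $(0,n/2)$ and nonnegative on $(n/2,n)$, strictly so off the integers; hence the interpolant is monotone down to $x=n/2$ and back up, and the unique minimum over $j\in\{0,\dots,n-1\}$ occurs at $j=n/2$. Differentiating first and then summing is what makes the closed form available; trying to close up the cosine sum directly, as your fallback proposes, is substantially messier.
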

\begin{proof}
As $n$ is even we can write
\[
\lambda(\chi)=\sum_{k=1}^{n/2-1} \frac{1}{k}(\chi^{-k}+\chi^k) + \frac{2}{n}\chi^{n/2}.
\]
Now note that the $n$ elements of the group $C_n$ can be represented by $\e^{2\pi i j/n}$ for $j\in \mathcal{J} = \{0,\dots,n-1\}$. With a slight abuse of notation we can thus write
\[
\lambda(j)=\sum_{k=1}^{n/2-1} \frac{1}{k} 2 \cos (\frac{2\pi j k}{n}) + \frac{2}{n} (-1)^j.
\]
So, we need to prove that $\argmin\lambda(j)=\frac{n}{2}$.

For this first define $\kappa(x):[0,n)\rightarrow \mathbb{R}$ by
\[
\kappa(x)=\Re\Big[\sum_{k=1}^{n/2-1} \frac{1}{k} 2 \cos (\frac{2\pi x k}{n}) + \frac{2}{n} (-1)^x\Big].
\]
Now note that all eigenvalues are real numbers, so $\kappa(j)=\lambda(j)$ for all $j\in \mathcal{J}$. Hence, $\argmin \kappa(x)=\frac{n}{2}$ implies $\argmin\lambda(j)=\frac{n}{2}$.

To prove this let us look at the derivative of $\kappa(x)$. Differentiating $\kappa(x)$ we get
\[
\kappa '(x)=\frac{-4\pi}{n} \sum_{k=1}^{n/2-1} \sin (\frac{2\pi x k}{n}) - \frac{2\pi}{n}\sin (x\pi).
\]
%We now need the following lemma.
%\begin{lem}
%$\sum_{k=1}^n \sin (kx) = \frac{\cos(\frac{x}{2})-\cos((n+\frac{1}{2})x)}{2\sin (\frac{x}{2})}$.
%\end{lem}
%\begin{proof}
%This can be proven using the Prosthaphaeresis formulas. We get
%\begin{eqnarray*}
%\sin (\frac{x}{2})\sum_{k=1}^n \sin (kx) &=& \frac{1}{2}
%\Big( \sum_{k=1}^n \cos (kx-\frac{x}{2}) - \cos(kx+\frac{x}{2}) \Big)\\
%&=&\frac{1}{2}(\cos(\frac{x}{2}) - \cos((n+\frac{1}{2})x).
%\end{eqnarray*}
%Hence,
%$$
%\sum_{k=1}^n \sin (kx) = \frac{\cos(\frac{x}{2})-\cos((n+\frac{1}{2})x)}{2\sin (\frac{x}{2})}.
%$$
%\end{proof}
To simplify this expression note that
\[
\sum_{k=1}^m \sin (kx) = \frac{\cos(\frac{x}{2})-\cos((m+\frac{1}{2})x)}{2\sin (\frac{x}{2})},
\]
as can be proved using the Prosthaphaeresis formulas.

We thus find
%Using the angle difference identity for the cosine we get%
\begin{eqnarray*}
\kappa '(x)&=&\frac{-4\pi}{n} \frac{\cos(\frac{\pi x}{n})-\cos(\frac{(n-1)\pi x}{n})}{2\sin (\frac{\pi x}{n})}- \frac{2\pi}{n}\sin (\pi x)\\
&=&\frac{-4\pi}{n} \frac{\cos(\frac{\pi x}{n})-(\cos(\pi x)\cos (\frac{\pi x}{n})+\sin(\pi x)\sin (\frac{\pi x}{n}))}{2\sin (\frac{\pi x}{n})}- \frac{2\pi}{n}\sin (\pi x)\\
&=&\frac{2\pi}{n}(\cos(\pi x)-1)\frac{\cos (\frac{\pi x}{n})}{\sin (\frac{\pi x}{n})}.
\end{eqnarray*}
Hence, $\kappa '(x)\leq 0$ for $x \in (0,\frac{n}{2})$ and $\kappa '(x) \geq 0$ for $x \in (\frac{n}{2},n)$. Further, by l'H\^opital's rule, $\lim_{x\to 0} \kappa '(x)=0$. Also, $\kappa'(x) < 0$ for $x \in (\frac{n}{2}-1,\frac{n}{2})$ and $\kappa '(x) > 0$ for $x \in (\frac{n}{2},\frac{n}{2}+1)$. Thus $\kappa(x)$ has a unique minimum at $x^{*}=\frac{n}{2}$, which completes the proof.
\end{proof}

\section{\jan{The Hamming cube}} \label{sec:Hypercube}
In the previous section we considered the one-dimensional
toric grid with an arbitrary number of points. In this
section we \jan{consider} the $d$-dimensional toric grid, but
now we will assume that the grid has size two in all
dimensions. That is, we consider $G:=\prod_{i=1}^d
(\ZZ/2)=\{0,1\}^d$. Further we define a graph on $G$ by
connecting two elements if they differ by
\jan{$(0,\ldots,0,1,0,\ldots,0)$} for some position of the
$1$. One can think of this graph as the $d$-dimensional unit
hypercube; the corners of this hypercube are the points of
the toric grid. In fact, one should construct a $d$-torus
from the $d$-dimensional hypercube, but this only connects
corners that are connected already, as our grid has size two in every dimension.

Let $f:\jan{\ZZ_+} \to \RR_+$ be such that its $m$-th
forward difference has sign $(-1)^{m}$, i.e.\jan{,} we
assume that
\begin{equation}
\label{nforpos}
(-1)^m \Delta^m [f] (x)  >0
\end{equation}
for all $x\in\ZZ_+$. Here $\Delta$ denotes the forward difference,
$\Delta [f] (x) = f(x+1) -f(x)$.

We now define the function $u:G \times G \setminus \Delta \to \RR_+$ as
$u(g,h):=f(\delta(g,h))$, where $\delta(g,h)$ denotes the Lee distance between $g$ and $h$. We will
validate the Checkerboard Conjecture for this case. That is, we will prove the following theorem.
\begin{thm} \label{thm:Hypercube}
The fractional energy $(x|Ax)$ over all $x \in V=\RR^G$ with
$(x|x)=(x|\one)=|G|/2=2^{d-1}$ is minimised by the characteristic vectors
$x_{S_\text{even}}$ and $x_{S_\text{odd}}$ of the checkerboard
configurations
\[ S_\text{even/odd}:=\{(g_1,\ldots,g_d) \in G \mid \sum_i g_i \text{
even/odd}\}, \]
and only by these. As a consequence, both the total energy and the maximal
energy among all subsets $S \subseteq G$ with $|S|=2^{d-1}$ are uniquely
minimised by $S_\text{even}$ and $S_{\text{odd}}$.
\end{thm}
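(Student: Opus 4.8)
The plan is to follow the strategy established in Section~\ref{sec:Framework} and used in the one-dimensional case: by Proposition~\ref{prop:Eigenvalue}, the optimal solutions of fractional energy minimisation are $\frac{p}{|G|}\one + y$ with $y$ in the $\lambdamin$-eigenspace. Since the two checkerboard vectors are $\frac{1}{2}\one \pm \frac{1}{2}z$, where $z$ is the character $\chi_{\text{all ones}}$ sending $(g_1,\ldots,g_d)$ to $(-1)^{\sum_i g_i}$, it suffices to prove that $\lambda(\chi)$ attains its unique minimum over all characters $\chi \in G^\vee$ at this single character $z$ (and that this minimum is strictly smaller than $\lambda(\one)$, so that the perpendicularity to $\one$ is automatic and the eigenspace is exactly one-dimensional). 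So everything reduces to an eigenvalue computation on $G^\vee$.

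The characters of $G = (\ZZ/2)^d$ are indexed by subsets $T \subseteq \{1,\ldots,d\}$, with $\chi_T(g) = (-1)^{\sum_{i \in T} g_i}$; the relevant invariant of $T$ is its cardinality $t = |T|$, since the Lee distance on $\{0,1\}^d$ is just the Hamming weight. I would compute
\[
\lambda(\chi_T) = \sum_{g \neq 0} f(\mathrm{wt}(g))\, (-1)^{\sum_{i \in T} g_i}
= \sum_{w=1}^{d} f(w) \sum_{\substack{g : \mathrm{wt}(g) = w}} (-1)^{|g \cap T|},
\]
and the inner sum depends only on $t = |T|$: it equals a Krawtchouk-type coefficient $K_w(t)$, namely the coefficient of counting weight-$w$ vectors by parity of overlap with a fixed $t$-set, which works out to $\sum_{j}(-1)^j \binom{t}{j}\binom{d-t}{w-j}$. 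Thus $\lambda$ becomes a function $\Lambda(t) := \sum_{w=1}^d f(w) K_w(t)$ of the single integer variable $t \in \{0,1,\ldots,d\}$, and the claim is that $\Lambda$ is uniquely minimised at $t = d$.

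The main obstacle — and the place where the hypothesis $(-1)^m\Delta^m[f](x) > 0$ must enter — is showing that $\Lambda(t)$ is strictly decreasing in $t$, or at least that $\Lambda(d) < \Lambda(t)$ for all $t < d$. I expect the cleanest route is to avoid the explicit Krawtchouk sum and instead use a coordinate-by-coordinate tensor/recursive structure: the energy kernel on $(\ZZ/2)^d$ is built from those on the factors, and flipping one coordinate of $T$ from ``out'' to ``in'' changes $\lambda$ by an expression one can write as an alternating combination of values of $f$. Concretely, I would try to express the difference $\Lambda(t) - \Lambda(t+1)$ as a positive combination of higher forward differences of $f$ — schematically $\Lambda(t)-\Lambda(t+1) = \sum_{k} c_{k,t}\,(-1)^{k}\Delta^{k}[f](\,\cdot\,)$ with $c_{k,t} > 0$ and at least one term present — so that condition~\eqref{nforpos} forces the difference to be strictly positive. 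Establishing this identity with manifestly nonnegative coefficients (and verifying at least one is positive) is the real content; once it is in place, monotonicity of $\Lambda$ gives the unique minimiser $t=d$, hence the unique minimal eigenvalue, hence by Proposition~\ref{prop:Eigenvalue} the checkerboard vectors are the unique fractional minimisers, hence (being $0/1$-vectors, and cosets of the subgroup $S_{\text{even}}$) the unique minimisers of $\Etot$ and therefore also of $\Emax$.

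I would carry out the steps in this order: (1) reduce to the statement $\argmin_{\chi}\lambda(\chi) = \{z\}$ via Proposition~\ref{prop:Eigenvalue}, exactly as was done to pass from Theorem~\ref{thm:Circle} to its accompanying proposition; (2) parametrise characters by $t = |T|$ and reduce $\lambda$ to a function $\Lambda(t)$ of one variable; (3) prove $\Lambda$ is strictly decreasing using~\eqref{nforpos}, which is the hard part; (4) conclude $t=d$ is the unique minimiser, note $\Lambda(d) < \Lambda(0) = \lambda(\one)$ in particular so the eigenspace is one-dimensional and $\perp \one$; (5) invoke Proposition~\ref{prop:Eigenvalue} and the coset argument from Section~\ref{sec:Framework} to upgrade from fractional to total to maximal energy. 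If the clean alternating-sum identity in step~(3) proves elusive, a fallback is to induct on $d$: write $G = (\ZZ/2)^{d-1} \times (\ZZ/2)$, split $\lambda(\chi_T)$ according to whether the last coordinate is in $T$, and relate the two sub-cases to energy kernels on $(\ZZ/2)^{d-1}$ with modified functions $f$ that still satisfy~\eqref{nforpos}; the base case $d=1$ is elementary.
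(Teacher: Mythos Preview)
Your plan is correct and is essentially the paper's own argument. The paper reduces to Proposition~\ref{prop:Eigenvalue} exactly as you describe, then compares two characters differing in a single coordinate (your ``flip one coordinate of $T$''), and computes
\[
\lambda(\chi)-\lambda(\tilde{\chi})
=2\sum_{l=0}^{q}\binom{q}{l}\,(-1)^{d-1-q}\bigl(\Delta^{d-1-q}f\bigr)(l+1)>0,
\]
where $q$ counts the $+1$'s among the unflipped coordinates---precisely the ``positive combination of higher forward differences of $f$'' you anticipate, here all of a single fixed order $d-1-q$. Your Krawtchouk framing and the function $\Lambda(t)$ are a mild repackaging of this, not a different method; the paper simply bypasses them by invoking symmetry directly and doing the same single-flip computation.
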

To prove Theorem~\ref{thm:Hypercube} we again capitalise on Proposition~\ref{prop:Eigenvalue} and determine the smallest eigenvalue of the energy kernel $A$. As the toric grid has size two in all dimensions we know by the discussion in Section~\ref{sec:Framework} that the eigenvalues are functions of the dual group $G^\vee=\{-1,1\}^d$. We further know that the fractional energy is minimised by the checkerboard vectors if the eigenvalue has a unique minimum at $(-1,\ldots,-1)\in G^\vee$. This we prove in the following proposition, which thus implies Theorem~\ref{thm:Hypercube}.
\begin{prop}
The function $\lambda : G^\vee \to \RR$,
\[ \lambda(\chi) = \sum_{g \neq 0} f(\delta(g,0)) \prod_{i=1}^d\chi_i^{g_i} \]
has a unique minimum at $\chi=(-1,\ldots,-1)$.
\end{prop}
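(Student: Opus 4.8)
The plan is to identify a character $\chi$ with the sign vector $s\in\{0,1\}^d$ defined by $\chi_i=(-1)^{s_i}$, and to show that flipping any coordinate $s_j$ from $0$ to $1$ \emph{strictly} decreases $\lambda$; iterating this forces the unique minimiser to be $s=(1,\ldots,1)$, that is, $\chi=(-1,\ldots,-1)$. Since $G=\{0,1\}^d$ and the Lee distance $\delta(g,0)$ equals the Hamming weight $|g|:=\sum_i g_i$, we have $\lambda(\chi)=\sum_{g\neq 0}f(|g|)(-1)^{s\cdot g}$. First I would fix $s$ and an index $j$ with $s_j=0$, let $\tilde s$ be $s$ with its $j$-th entry changed to $1$, and observe that in $\lambda(\tilde s)-\lambda(s)$ only the terms with $g_j=1$ survive; writing such a $g$ as $(g',1)$ with $g'\in\{0,1\}^{d-1}$ ranging over the remaining coordinates (and $s'$ for $s$ with its $j$-th entry deleted), one obtains
\[
\lambda(\tilde s)-\lambda(s)=-2\sum_{g'\in\{0,1\}^{d-1}}f(|g'|+1)\,(-1)^{s'\cdot g'}.
\]
So the proposition reduces to the claim that $\Phi_r(h,e):=\sum_{g\in\{0,1\}^r}h(|g|)(-1)^{e\cdot g}$ is strictly positive for every $e\in\{0,1\}^r$, applied with $r=d-1$ and $h(k):=f(k+1)$; note that $h$ again satisfies $(-1)^m\Delta^m h>0$, since $\Delta$ commutes with the shift $k\mapsto k+1$.

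For the positivity of $\Phi_r(h,e)$: by the symmetry of $\{0,1\}^r$ we may assume the first $w:=\sum_i e_i$ coordinates of $e$ are $1$ and the rest $0$. Splitting $g=(a,b)$ with $a\in\{0,1\}^w$ and $b\in\{0,1\}^{r-w}$, the sum over $b$ equals $\sum_{l}\binom{r-w}{l}h(|a|+l)=\big((I+E)^{r-w}h\big)(|a|)$, where $E$ is the forward shift operator; carrying out the remaining sum over $a$ gives $\sum_j\binom{w}{j}(-1)^j\big((I+E)^{r-w}h\big)(j)=\big((I-E)^w(I+E)^{r-w}h\big)(0)$. Since $I-E=-\Delta$ and $I+E=2I+\Delta$ commute, this equals $\big((I+E)^{r-w}(-\Delta)^w h\big)(0)=\sum_{i=0}^{r-w}\binom{r-w}{i}\,(-1)^w\Delta^w h(i)$, which is a sum of strictly positive quantities by hypothesis (the $m=w$ instances of $(-1)^m\Delta^m[f]>0$, together with $f>0$ for the $w=0$ case) and is nonempty because $w\le r$. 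Hence $\Phi_r(h,e)>0$.

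Combining the two steps: from any $s\neq(1,\ldots,1)$ one can flip a $0$-entry to a $1$ and strictly decrease $\lambda$, so walking along such a monotone path up to $(1,\ldots,1)$ shows $\lambda(-1,\ldots,-1)<\lambda(\chi)$ for every other $\chi\in G^\vee$; and at $(-1,\ldots,-1)$ no further flip is possible. I expect the only delicate point to be the double summation in the second step: one must sum over $b$ \emph{first} and over $a$ \emph{afterwards} so as to land on the manifestly positive expression $\sum_i\binom{r-w}{i}(-1)^w\Delta^w h(i)$, rather than an alternating combination of finite differences whose sign is not evident. As an alternative to this entire computation one could invoke Hausdorff's theorem to write $f(k)=\int_0^1 t^k\,d\mu(t)$ for a positive measure $\mu$; then $\lambda(\chi)=\int_0^1\big[(1-t)^w(1+t)^{d-w}-1\big]\,d\mu(t)$ with $w=\sum_i s_i$, and since $(1-t)^w(1+t)^{d-w}=(1+t)^d\big(\tfrac{1-t}{1+t}\big)^w$ is strictly decreasing in $w$ on $t\in(0,1]$ while $\mu((0,1])>0$ because $f$ is non-constant, the conclusion follows at once.
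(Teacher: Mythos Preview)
Your main argument is correct and is essentially the paper's proof: both compare $\lambda$ at two characters differing in a single coordinate, reduce the difference to $2\sum_{g'\in\{0,1\}^{d-1}} f(|g'|+1)(-1)^{s'\cdot g'}$, and then recognise this sum as a nonnegative binomial combination of the strictly positive quantities $(-1)^{w}\Delta^{w}f$---your operator calculus $(I\pm E)$ is a tidy repackaging of the paper's explicit double sum, with your $w$ equal to the paper's $d-1-q$. The Hausdorff--moment alternative you sketch at the end is not used in the paper's proof of this proposition, but it nicely anticipates the Bernstein-integral factorisation that the paper employs in Section~\ref{sec:Four} for the multiples-of-four case.
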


\begin{proof}
\jan{
Because of symmetry it suffices to prove that if
$\chi,\tilde{\chi} \in \{-1,1\}^d$ are characters that differ only
in the first position, where $\chi_1=1,\tilde{\chi}_1=-1$, then
$\lambda(\chi)>\lambda(\tilde{\chi})$. We therefore compute
\begin{align*}
\lambda(\chi)-\lambda(\tilde{\chi}) &=
\sum_{g \neq 0} f(\delta(g,0)) (1-(-1)^{g_1}) \chi_2^{g_2} \cdots \chi_d^{g_d}\\
&=
2 \sum_{g=(g_2,\ldots,g_d)} f(\delta(g,0)+1) \chi_2^{g_2} \cdots
\chi_d^{g_d}.
\end{align*}
Writing $q$ for the number of ones among the
$\chi_2,\ldots,\chi_d$ and using
dummy $l$ for the number of indices $j$ with $\chi_j=g_j=1$
and dummy $k$ for the number of indices $j$ with
$\chi_j=-1$ and $g_j=1$ the above expression equals
\begin{align*}
&2 \sum_{l=0}^q
\binom{q}{l} \sum_{k=0}^{d-1-q} f(l+k+1) \binom{d-1-q}{k}
(-1)^k\\
&=2 \sum_{l=0}^q
\binom{q}{l} (-1)^{d-1-q} (\Delta^{d-1-q} f)(l+1)\\
&>0;
\end{align*}
where the last inequality follows from the conditions on $f$ and the
last equality follows from the well-known formula
\[
\Delta^m [f] (x) = \sum_{k=0}^m \binom{m}{k} (-1)^{m+k} f(x+k);
\]
see for instance \cite[page 203, equation 1a]{Rio79}. This
proves the proposition.}
\end{proof}

\leaveout{
Because of symmetry we have $\lambda(\hat\chi)=\lambda(\tilde\chi)$ whenever $\sum_{i=1}^d \hat\chi_i = \sum_{i=1}^d \tilde\chi_i$. We therefore consider the function $\kappa : \{0,\ldots,d\} \to \RR$ defined by
\[
\kappa(u) = \sum_{g \neq 0} f(\delta(g,0)) \prod_{i=1}^{d-u}(-1)^{g_i},
\]
so that $\kappa(u)=\lambda(\chi)$ whenever $\sum_{i=1}^d \chi_i = 2u-d$. The variable $u$ can be thought of as the number of ones in the vector $\chi$. In order to prove the proposition we thus have to prove that $\kappa(u)$ has a unique minimum at $u=0$.

Now first note that $\kappa(u)$ can be written as
\[
\kappa (u) = \sum_{a=0}^u\sum_{b=0}^{d-u} \binom{u}{a}\binom{d-u}{b}(-1)^b f(a+b) - f(0).
\]
Then,
\begin{eqnarray*}
\kappa (u+1) - \kappa(u) &=& \sum_{a=0}^{u+1}\sum_{b=0}^{d-u-1} \binom{u+1}{a}\binom{d-u-1}{b}(-1)^bf(a+b) - \sum_{a=0}^u\sum_{b=0}^{d-u} \binom{u}{a}\binom{d-u}{b}(-1)^bf(a+b) \\
&=& \sum_{c=0}^d f(c) \Big( \sum_{a=0}^{u+1} \binom{u+1}{a}\binom{d-u-1}{c-a} (-1)^{c-a} - \sum_{a=0}^u \binom{u}{a}\binom{d-u}{c-a}(-1)^{c-a}\Big) \\
&=& \sum_{c=0}^d f(c)(-1)^c \Big( \sum_{a=0}^{u+1} \binom{u+1}{a}\binom{d-u-1}{c-a} (-1)^{a} - \sum_{a=0}^u \binom{u}{a}\binom{d-u}{c-a}(-1)^{a}\Big).
\end{eqnarray*}
By Pascal's rule we know
\begin{eqnarray*}
\binom{u+1}{a} =\binom{u}{a} + \binom{u}{a-1}
\end{eqnarray*}
Further,
\begin{eqnarray*}
\sum_{a=0}^{u+1} \binom{u}{a-1}\binom{d-u-1}{c-a} (-1)^{a} &=& \sum_{a=0}^{u} \binom{u}{a}\binom{d-u-1}{c-a-1} (-1)^{a+1} \\
&=& \sum_{a=0}^{u} \binom{u}{a} \Big( \binom{d-u}{c-a} -\binom{d-u-1}{c-a} \Big) (-1)^{a+1}.
\end{eqnarray*}
Thus,
\begin{equation}
\label{kappadiff}
\kappa (u+1) - \kappa(u) = 2 \sum_{c=0}^d f(c)(-1)^c \sum_{a=0}^{u}\binom{u}{a}(-1)^{a}\Big( \binom{d-u-1}{c-a} - \binom{d-u}{c-a} \Big).
\end{equation}

We now need the following lemma.
\begin{lem}
\label{lem1}
$\sum_{i=0}^k \binom{k}{i} \binom{x-k}{y-i} (-1)^i = \sum_{i=0}^k \binom{k}{i} \binom{x-i}{y-i} (-2)^i$.
\end{lem}
\begin{proof}
Using~\cite[page 8, equation 5]{Rio79} we know,
\[
\binom{x-k}{y-i} = \sum_{j=0}^{k-i} (-1)^j \binom{k-i}{j} \binom{x-i-j}{y-i-j}.
\]
Thus,
\begin{eqnarray*}
\sum_{i=0}^k \binom{k}{i} \binom{x-k}{y-i} (-1)^i &=& \sum_{i=0}^k \binom{k}{i} (-1)^i  \sum_{j=0}^{k-i} (-1)^j \binom{k-i}{j} \binom{x-i-j}{y-i-j} \\
&=& \sum_{i=0}^k \binom{k}{i} \sum_{j=0}^{k-i} \binom{k-i}{j} \binom{x-i-j}{y-i-j} (-1)^{i+j} \\
&=& \sum_{j=0}^k \sum_{i=0}^{j} \binom{k}{i} \binom{k-i}{j-i} \binom{x-j}{y-j} (-1)^{j}.
\end{eqnarray*}
%By~\cite[page 3, equation (iv)]{Rio79} we know,
Hence, as
\[
\binom{k}{i}\binom{k-i}{j-i}=\binom{k}{j}\binom{j}{i}
\]
%and, %by~\cite[page 4]{Rio79},
%\[
%\sum_{i=0}^j \binom{j}{i} = 2^j,
%\]
we get
\begin{eqnarray*}
\sum_{j=0}^k \sum_{i=0}^{j} \binom{k}{i} \binom{k-i}{j-i} \binom{x-j}{y-j} (-1)^{j} &=& \sum_{j=0}^k \binom{k}{j} \binom{x-j}{y-j} (-1)^{j} \sum_{i=0}^{j} \binom{j}{i} \\
&=&\sum_{j=0}^k \binom{k}{j} \binom{x-j}{y-j} (-1)^{j} 2^j.
\end{eqnarray*}
This completes the proof of the lemma.
\end{proof}

Applying Lemma~\ref{lem1} twice, equation~\eqref{kappadiff} simplifies to
\begin{eqnarray*}
\kappa (u+1) - \kappa(u) &=& 2 \sum_{c=0}^d f(c)(-1)^c \sum_{a=0}^{u}\binom{u}{a}(-2)^{a}\Big( \binom{d-a-1}{c-a} - \binom{d-a}{c-a} \Big) \\
&=& \sum_{c=0}^d f(c)(-1)^c \sum_{a=0}^{u}\binom{u}{a}(-2)^{a+1}\Big( \binom{d-a}{c-a} - \binom{d-a-1}{c-a} \Big).
\end{eqnarray*}

Thus, using Pascal's rule again,
\begin{eqnarray*}
\kappa (u+1) - \kappa(u) &=& \sum_{c=0}^d f(c)(-1)^c \sum_{a=0}^{u}\binom{u}{a}(-2)^{a+1} \binom{d-a-1}{c-a-1} \\
%&=&  \sum_{c=1}^d f(c) \sum_{a=0}^{u}\binom{u}{a} 2^{a+1} \binom{d-a-1}{c-a-1} (-1)^{c+a+1} \\
&=& \sum_{c=0}^d f(c+1) \sum_{a=0}^{u}\binom{u}{a} 2^{a+1} \binom{d-a-1}{c-a} (-1)^{c+a} \\
&=& \sum_{a=0}^{u} \sum_{c=0}^{d-a-1} \binom{u}{a} 2^{a+1} \binom{d-a-1}{c} (-1)^{c} f(a+c+1) \\
&=& \sum_{a=0}^{u} \binom{u}{a} 2^{a+1} \sum_{c=0}^{d-a-1}  \binom{d-a-1}{c} (-1)^{d-a-1-c}f(a+c+1) (-1)^{d-a-1}.
\end{eqnarray*}
Hence, by equation~\eqref{nfor}
\begin{eqnarray*}
\kappa(u+1) - \kappa(u) &=& \sum_{a=0}^{u} \binom{u}{a} 2^{a+1} \Delta^{d-a-1} [f] (i+1) (-1)^{d-a-1}.
\end{eqnarray*}
Thus, by equation~\eqref{nforpos}, $\kappa (u+1) - \kappa(u)>0$. So $\kappa(u)$ is strictly increasing and hence has a unique minimum at $u=0$. Thus, $\lambda(\chi)$ has a unique minimum at $\chi=(-1,\ldots,-1)$, which is what we had to prove.
\end{proof}
}

\section{Multiples of four} \label{sec:Four}

\jan{In this section we prove our strongest result about the Checkerboard
Conjecture, concerning $(n_1 \cdots n_d)/2$ particles on the Abelian
group $G=(\ZZ/n_1\ZZ) \times \cdots \times (\ZZ/n_d\ZZ)$ when each $n_i$
is either two or a multiple of four. Define a graph on $G$ by connecting
two elements if they differ by $\pm (0,\ldots,0,1,0,\ldots,0)$ for some
position of the $1$. The Lee distance is the shortest-path distance
between $g,h \in G$ in that graph, and denoted
by $\delta(g,h)$. The energy will be measured with a function $f:\RR_+
\to \RR_+$ that we assume to be (strictly) {\em completely monotonic},
i.e., $C^\infty$ and with $(-1)^k f^{(k)}(x)>0$ for all $x \in \RR_+$
and for all $k$.}

\begin{re}
Typical examples of completely monotonic $f$ are $f(x)=x^{-\alpha}$ for
$\alpha$ a positive real number. These functions appear prominently in the
paper \cite{Cohn07}, as well. Moreover, as mentioned in the introduction, these function generalise the inverse square law in physics, and describe the path loss for electromagnetic waves.
%Indeed, the notion ``universally optimal''
%in its title refers to configurations that are optimal for all such $f$,
%\jan{and it was} a 2008 mini-course by Cohn in Eindhoven that inspired
%us to use such functions.
\end{re}

\begin{re}
\jan{Note that if $g:\RR_+ \to \RR_+$
is completely monotonic, then so is $-\Delta g$, because
\[ (-1)^k (-\Delta g)^{(k)} (x)=
(-1)^k(g^{(k)}(x)-g^{(k)}(x+1))=
(-1)^{k+1} \int_x^{x+1} g^{(k+1)}(t)\mathrm{d}t >0 \]
for all $x>0$. Iterating, we find that $(-\Delta)^k g$ is a
positive function, so that the restriction of $g$ to $\ZZ_+$
satisfies the condition on $f$. In particular,
Theorem~\ref{thm:Hypercube} implies the special case of the
upcoming
Theorem~\ref{thm:Fourfolds} where all $n_i$ are equal to
$2$.}
\end{re}

Define the function $u:G \times G \setminus \Delta \to
\RR_+$ as $u(g,h):=f(\delta(g,h))$. The maximal energy $\Emax$, the total
energy $\Etot$, the energy kernel $A$, and the fractional energy are
defined using this function $u$.

\begin{thm} \label{thm:Fourfolds}
Assume that each of $n_1,\ldots,n_d$ is either $2$ or a multiple of
$4$. Then the fractional energy $(x|Ax)$ over all $x \in V=\RR^G$ with
$(x|x)=(x|\one)=|G|/2$ is minimised by the characteristic vectors
$x_{S_\text{even}}$ and $x_{S_\text{odd}}$ of the checkerboard
configurations
\[ S_\text{even/odd}:=\{(j_1,\ldots,j_d) \mid \sum_i j_i \text{
even/odd}\}, \]
and only by these. As a consequence, both the total energy and the maximal
energy among all subsets $S \subseteq G$ with $|S|=|G|/2$ are uniquely
minimised by $S_\text{even}$ and $S_{\text{odd}}$.
\end{thm}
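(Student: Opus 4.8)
The plan is to follow exactly the strategy announced after Example~\ref{ex:4by4b}: by Proposition~\ref{prop:Eigenvalue} it suffices to show that the smallest eigenvalue $\lambda(\chi)$ of the energy kernel, as $\chi$ ranges over the nontrivial characters of $G$, is attained \emph{uniquely} at $\chi_0=(-1,\ldots,-1)$ (which is a legitimate character since each $n_i$ is even), because then the only optimal solutions of fractional energy minimisation are $\frac12\one\pm\frac12 z$ with $z$ the $\pm1$-valued eigenvector for $\chi_0$, and these are precisely the two checkerboard characteristic vectors; the consequences for $\Etot$ and $\Emax$ then follow as in the framework section. Writing $G^\vee=\prod_i C_{n_i}$ and parametrising a character by $\zeta=(\zeta_1,\ldots,\zeta_d)$ with $\zeta_i$ an $n_i$-th root of unity, the key point is that the Lee-distance weight factorises coordinatewise in the following sense: $\delta(g,0)=\sum_i \delta_i(g_i,0)$, so $f(\delta(g,0))$ does \emph{not} factorise, but one can still hope to reduce to a one-dimensional statement via a completely-monotonic / Bernstein-type representation of $f$.

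Concretely, the main device I would use is the integral representation of a completely monotonic function: by Bernstein's theorem, $f(x)=\int_0^\infty e^{-sx}\,d\mu(s)$ for a nonnegative measure $\mu$ on $(0,\infty)$ (strict complete monotonicity guaranteeing $\mu$ is not supported on a single point in a degenerate way). Substituting this into $\lambda(\zeta)=\sum_{g\neq0} f(\delta(g,0))\prod_i\zeta_i^{g_i}$ and interchanging sum and integral turns the exponential of a sum into a product, giving
\[
\lambda(\zeta)+f(0)=\int_0^\infty \prod_{i=1}^d \Big(\sum_{g_i\in\ZZ/n_i} e^{-s\,\delta_i(g_i,0)}\zeta_i^{g_i}\Big)\,d\mu(s)
=\int_0^\infty \prod_{i=1}^d \phi_{n_i}(s,\zeta_i)\,d\mu(s),
\]
where $\phi_{n}(s,\zeta):=\sum_{g\in\ZZ/n} e^{-s|g|}\zeta^{g}$ is a one-dimensional ``theta-like'' kernel. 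So everything reduces to understanding, for each fixed $s>0$, the behaviour of the single-variable function $\zeta\mapsto\phi_{n}(s,\zeta)$ over the $n$-th roots of unity, and in particular to showing $\phi_n(s,\cdot)$ is minimised uniquely at $\zeta=-1$, with enough ``margin'' that the product over coordinates is still uniquely minimised at $\zeta=(-1,\ldots,-1)$ after integrating against $\mu$.

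The one-dimensional analysis is where the hypothesis ``$n$ is $2$ or a multiple of $4$'' must enter, and this is the step I expect to be the main obstacle. For $n=2$ one has $\phi_2(s,\zeta)=1+e^{-s}\zeta$, linear in $\zeta$, minimised at $\zeta=-1$; this is the easy case and recovers Theorem~\ref{thm:Hypercube}. For $n$ a multiple of $4$ one must show that $\phi_n(s,e^{i\theta})=1+2\sum_{k=1}^{n/2-1}e^{-sk}\cos k\theta + e^{-sn/2}\cos\tfrac{n}{2}\theta$ (a real quantity), evaluated at $\theta=2\pi j/n$, is minimised uniquely at $j=n/2$. Here I would argue as in Section~\ref{sec:Circle}: extend to a smooth function $\kappa_s(x)$ of a real variable, differentiate, sum the resulting geometric/trigonometric series in closed form using the Prosthaphaeresis identities, and show $\kappa_s'(x)$ has the sign of $-(1-\cos\pi x)$ times a positive factor on $(0,n)$ so that $\kappa_s$ strictly decreases on $(0,n/2)$ and strictly increases on $(n/2,n)$; the requirement that $n/2$ be \emph{even} is exactly what makes $\zeta=-1$ (i.e.\ $x=n/2$ hitting a lattice point where the last term $\cos\frac n2\theta$ contributes $+$, not the issue) the genuine minimiser over the grid rather than a nearby point, and it is what forbids $n\equiv 2\pmod 4$. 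The remaining subtlety is the passage from ``each factor uniquely minimised at $-1$'' to ``the product, integrated against $\mu$, uniquely minimised at $(-1,\ldots,-1)$'': one must check that the factors $\phi_{n_i}(s,\zeta_i)$ are \emph{positive} for all $s$ (so that a strict decrease in one coordinate strictly decreases the product) — this positivity is itself a consequence of the same closed-form computation, since $\phi_n(s,\zeta)$ is, up to a positive constant, a value of a Poisson-type kernel — and then a telescoping argument changing one coordinate at a time from $\zeta_i$ to $-1$, combined with $\mu\geq0$ and $\mu\neq0$, yields the strict inequality $\lambda(\zeta)>\lambda(\chi_0)$ for every $\zeta\neq\chi_0$. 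With that, Proposition~\ref{prop:Eigenvalue} finishes the proof.
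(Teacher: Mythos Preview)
Your overall strategy---Proposition~\ref{prop:Eigenvalue}, Bernstein's representation $f(x)=\int_0^\infty e^{-sx}\,d\mu(s)$, factorisation of the resulting exponential kernel into one-dimensional sums $\phi_{n_i}(s,\zeta_i)$, and the observation that positivity of each factor lets one reduce to a coordinate-by-coordinate comparison---is exactly the paper's, and correct. The gap is in the one-dimensional step. Your claim that the continuous interpolation $\kappa_s(x)$ is monotone on $(0,n/2)$, with $\kappa_s'(x)$ of the form $-(1-\cos\pi x)$ times a positive factor, is false. The Section~\ref{sec:Circle} trick worked only because the weights $1/k$ cancelled the factor $k$ arising from differentiating $\cos(k\theta)$; with weights $e^{-sk}$ there is no such cancellation, and in fact even the discrete sequence $j\mapsto\phi_n(s,e^{2\pi i j/n})$ is non-monotone. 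For instance, with $n=8$ and $e^{-s}=0.9$ one computes $\phi_8\approx 0.036$ at $j=2$, $\approx 0.102$ at $j=3$, and $\approx 0.018$ at $j=4$: the values zig-zag because the roots with $\zeta^{n/2}=+1$ and those with $\zeta^{n/2}=-1$ lie on two \emph{different} decreasing curves. This is the ``rough edges'' phenomenon of Remark~\ref{re:DFT}, and it is precisely why the proposition fails for $n\equiv 2\pmod 4$---not, as you suggest, a question of which lattice point the last cosine term favours.

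The paper resolves this by summing the geometric series in closed form, obtaining
\[
\phi_n(s,\zeta)=\bigl(1\mp a^{-n/2}\bigr)\,\frac{1-a^{-2}}{|1-a^{-1}\zeta|^{2}},\qquad a=e^{s}>1,
\]
with the sign chosen according to $\zeta^{n/2}=\pm 1$. Both branches are visibly positive (this also supplies the positivity you needed), and within each parity class the expression is minimised at the root furthest from~$1$. When $4\mid n$, the root $\zeta_0=-1$ lies in the class $\zeta^{n/2}=+1$ and hence carries the \emph{smaller} prefactor $1-a^{-n/2}$; it is also further from $1$ than the best competitor $\zeta_1=-e^{\pm 2\pi i/n}$ in the other class. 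A direct comparison of these two candidate minima then gives $\phi_n(s,-1)<\phi_n(s,\zeta_1)$, and this two-branch comparison---not monotonicity---is where the hypothesis $4\mid n$ actually enters.
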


\jan{By Proposition~\ref{prop:Eigenvalue} it suffices to find
the smallest eigenvalue of the energy kernel $A$. Recall that the
eigenvalues are functions on the dual group $G^\vee$, which in
our present setting we may identify with $G^\vee=C_{n_1} \times \cdots
\times C_{n_d}$, where $C_a$ is the group of complex $a$-th roots of
unity. Since all $n_i$ are even, $G^\vee$ contains the real character
$-\one=(-1,\ldots,-1)$, with character space $V_{\{-\one\}}$ spanned by
the vector $z$ defined by $z(j_1,\ldots,j_d)=(-1)^{j_1+\ldots+j_d}$. The
checkerboard configurations have characteristic vectors $\frac{1}{2}\one
\pm \frac{1}{2}z$. Thus the following proposition implies
Theorem~\ref{thm:Fourfolds}.}

\begin{prop} \label{prop:Fourfolds}
The function $G^\vee \to \RR$ sending $\chi$ to
\[ \sum_{g \neq 0} f(\delta(g,0)) \chi(g) \]
has a unique minimum at $(-1,\ldots,-1)$.
\end{prop}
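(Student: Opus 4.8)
The plan is to combine the multiplicativity of the Lee distance over coordinates with the Bernstein--Widder representation of completely monotonic functions. Since $\delta(g,0)=\sum_{i=1}^d\delta_i(g_i,0)$, where $\delta_i$ is the Lee distance on $\ZZ/n_i$, and since $f$ is completely monotonic, the Hausdorff--Bernstein--Widder theorem provides a nonnegative Borel measure $\mu$ on $[0,\infty)$ with $f(x)=\int e^{-xt}\,\mathrm{d}\mu(t)$. Writing a character as $\chi=(\zeta_1,\dots,\zeta_d)$ with each $\zeta_i$ an $n_i$-th root of unity, and interchanging the (finite) sum with the integral, one obtains
\[
\lambda(\chi)=\sum_{g\neq 0}f(\delta(g,0))\chi(g)
=\int_{[0,\infty)}\Big(\textstyle\prod_{i=1}^d\phi_i(t,\zeta_i)-1\Big)\,\mathrm{d}\mu(t),
\]
where $\phi_i(t,\zeta):=\sum_{g\in\ZZ/n_i}e^{-t\,\delta_i(g,0)}\zeta^{g}$. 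Any atom of $\mu$ at $0$ contributes the same constant to every non-trivial character and may be discarded, and $\mu$ has positive mass on $(0,\infty)$, as otherwise $f$ would be constant.

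The proposition then reduces to a purely one-dimensional statement: for each $t>0$ the function $\zeta\mapsto\phi_i(t,\zeta)$ on the $n_i$-th roots of unity is strictly positive and attains its minimum \emph{uniquely} at $\zeta=-1$. Granting this, any $\chi\neq(-1,\dots,-1)$ has some coordinate with $\zeta_i\neq-1$, so $\phi_i(t,\zeta_i)>\phi_i(t,-1)$ and, all factors being positive, $\prod_i\phi_i(t,\zeta_i)>\prod_i\phi_i(t,-1)$ for every $t>0$; integrating against $\mu$ yields $\lambda(\chi)>\lambda(-1,\dots,-1)$. (The same argument also disposes of $\chi=\one$.)

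For the one-dimensional claim I would compute $\phi_i$ in closed form. Put $r=e^{-t}\in(0,1)$, $n=n_i$, $\zeta=e^{\mathrm{i}\theta}$. For $n=2$ one has immediately $\phi_i(t,\zeta)=1+r\zeta$, so $\phi_i(t,-1)=1-r$ is the strictly smallest, and positive, value. For $n=2m$ a multiple of $4$, splitting the sum at $g=m$ and folding $g\leftrightarrow n-g$ turns it into a finite geometric series that sums to the ``folded Poisson kernel''
\[
\phi_i(t,\zeta)=\frac{(1-r^2)\,(1-r^{m}\zeta^{m})}{1-2r\cos\theta+r^{2}}.
\]
The denominator equals $|1-r\zeta|^2>0$, and $\zeta^m=\pm1$ makes the numerator either $(1-r^2)(1-r^m)$ or $(1-r^2)(1+r^m)$, both positive; hence $\phi_i>0$. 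To locate the minimum, split on $s:=\zeta^m\in\{\pm1\}$. If $s=1$ the numerator is the constant $(1-r^2)(1-r^m)$, so $\phi_i$ is minimised by maximising the denominator, i.e.\ by $\cos\theta=-1$, i.e.\ by $\zeta=-1$ — which does lie in this branch because $m$ is even — and this minimiser is unique. If $s=-1$ the numerator is the larger constant $(1-r^2)(1+r^m)$, and $\zeta=-1$ is excluded from this branch (again because $m$ is even), so here $\cos\theta\le-\cos(2\pi/n)$; comparing $\phi_i(t,-1)=\dfrac{(1-r^2)(1-r^m)}{(1+r)^2}$ with the minimum $\dfrac{(1-r^2)(1+r^m)}{1+2r\cos(2\pi/n)+r^2}$ over the $s=-1$ branch and cross-multiplying (all quantities positive) reduces the desired strict inequality to $(1-r^m)\bigl(1+2r\cos(2\pi/n)+r^2\bigr)<(1+r^m)(1+r)^2$, which holds factor by factor.

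The step I expect to be most delicate is the reduction itself: invoking Bernstein--Widder and justifying the interchange of sum and integral (convergence is harmless since we sum over $g\neq0$, so $\delta(g,0)\ge1$), after which only a concrete computation on the cycle remains. It is worth emphasising that the hypothesis ``$n_i=2$ or $4\mid n_i$'' is used exactly where one needs $\zeta=-1$ to belong to the $s=1$ branch, equivalently $m=n_i/2$ to be even; for $n_i\equiv2\pmod4$ with $n_i>2$ the character $-\one$ lands in the ``wrong'' branch and the comparison genuinely breaks, in line with the Checkerboard Conjecture being open in that regime.
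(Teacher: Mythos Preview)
Your proof is correct and follows essentially the same route as the paper: Bernstein--Widder to reduce to exponential kernels, factorisation over coordinates via the additivity of the Lee distance, and then the one-dimensional geometric-series computation split into the two branches $\zeta^{n/2}=\pm1$; your closed form $\phi_i(t,\zeta)=\dfrac{(1-r^2)(1-r^m\zeta^m)}{|1-r\zeta|^2}$ is exactly what the paper derives, just written more compactly. One slip: on the $s=-1$ branch the correct bound is $\cos\theta\ge-\cos(2\pi/n)$ (not $\le$), but you use the right value $-\cos(2\pi/n)$ for the extremal $\cos\theta$ and the ensuing comparison is correct.
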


\begin{proof}
\jan{The completely monotonic function} $f$ can be written as
\[ f(x)=\int_{0}^\infty e^{-xt} \mathrm{d} \alpha(t), \]
where $\alpha$ is non-decreasing and the integral converges for $0 < x <
\infty$ (for this theorem of Bernstein see \cite[Chapter IV, Theorem
12b]{Widder41}). Thus, after interchanging integral and (finite) sum
the function that we seek to minimise \jan{is seen to equal}
\[ \chi \mapsto \int_{0}^\infty \left(\sum_{g \neq 0} e^{-\delta(g,0)t}
\chi(g) \right) \mathrm{d} \alpha(t). \]
For this function to have a unique minimum at $\chi=(-1,\ldots,-1)$
it suffices that for each fixed $t>0$ the function
\[ G^\vee \to \RR,\ \chi \mapsto \sum_{g \neq 0} e^{-\delta(g,0)t} \chi(g) \]
has a unique minimum at $\chi=(-1,\ldots,-1)$. At this point, there is
no harm in including $g=0$ in the sum, which adds a constant term $1$
independently of $\chi$. Writing $a:=e^t>1$, we want to minimise
\[ \sum_{g} a^{-\delta(g,0)} \chi(g) \]
over all characters $\chi$. Write $g=(g_1,\ldots,g_d)$; then \jan{the
Lee distance $\delta(g,0)$ equals $\delta(g_1,0)+\cdots+\delta(g_d,0)$, where we
use the notation $d$ also for the Lee distance in the individual
$\ZZ/n_i\ZZ$. Hence we have
\[ \sum_{g} a^{-\delta(g,0)} \chi(g)
= \sum_{(g_1,\ldots,g_d)} a^{-(\delta(g_1,0)+\ldots+\delta(g_d,0))}
\chi_1(g_1) \cdots \chi_d(g_d), \]
which factorises as the product of the factors
\[ \sum_{g_i \in \ZZ/n_i} a^{-\delta(g_i,0)} \chi_i(g_i) \]
for $i=1,\ldots,d$. Thus we are done if we can show
that each of these factors is positive for all $\chi_i$, and minimal
at $\chi_i=-1$.
}

What remains is an easy calculation. Simplify notation by fixing $i$
and writing $n:=n_i$. As a warm-up, consider the case where $n=2$.
Then the trivial character $1$ of $\ZZ/2\ZZ$ is mapped to $1+a^{-1}$,
while the character $-1$ of $\ZZ/2\ZZ$ is mapped to $1-a^{-1}$. The
latter value is smaller than the former, as desired. Next assume that $n$
is divisible by $4$, and consider the function $\lambda: C_n \to \RR$
sending an $n$-th root of unity $\zeta$ to
\begin{align*}
&\sum_{g \in \ZZ/n\ZZ} a^{-\delta(g,0)} \zeta^g \\
&=1 + \sum_{g=1}^{n/2-1} a^{-g}(\zeta^g + \zeta^{-g}) + a^{-n/2} \zeta^{n/2}\\
&=-1 + \sum_{g=0}^{n/2-1} a^{-g}(\zeta^g + \zeta^{-g}) + a^{-n/2} \zeta^{n/2}\\
&=\frac{1-a^{-n/2}\zeta^{n/2}}{1-a^{-1}\zeta}
+
\frac{1-a^{-n/2}\zeta^{-n/2}}{1-a^{-1}\zeta^{-1}} -(1-a^{-n/2}\zeta^{n/2}).\\
\end{align*}
Now $\zeta^{n/2}$ equals $\pm 1$. If it equals $1$, then this expression
reduces to
\begin{align*}
&(1-a^{-n/2})\left(\frac{1}{1-a^{-1}\zeta}+\frac{1}{1-a^{-1}\zeta^{-1}}-1\right)
&=(1-a^{-n/2})\left(\frac{1-a^{-2}}{||1-a^{-1}\zeta||^2}\right).
\end{align*}
Since $a>1$, this is a positive number for any $\zeta$, and its smallest
value is attained for the $n/2$-th root of unity $\zeta_0$ furthest away
from $1$ in the complex plane, which equals $-1$ since $n$ is a multiple
of $4$. Next, if $\zeta^{n/2}$ equals $-1$, then the expression above
reduces to
\begin{align*}
&(1+a^{-n/2})\left(\frac{1}{1-a^{-1}\zeta}+\frac{1}{1-a^{-1}\zeta^{-1}}-1\right)
&=(1+a^{-n/2})\left(\frac{1-a^{-2}}{||1-a^{-1}\zeta||^2}\right),
\end{align*}
which is again positive for all $\zeta$ with $\zeta^{n/2}=-1$ and minimal
for the $\zeta_1$ furthest away from $1$ in the complex plane (which
equals $-e^{\pm 2 \pi i/n}$).

It remains to compare $\lambda(\zeta_0)$ and $\lambda(\zeta_1)$. For
this, note that the factor $(1-a^{-n/2})$ in the expression for
$\lambda(\zeta_0)$ is smaller than the factor $(1+a^{-n/2})$ in
the expression for $\lambda(\zeta_1)$, while moreover $\zeta_0$ is
further away from $1$ than $\zeta_1$. Hence the minimum is attained
at $\zeta=\zeta_0=-1$, as desired. This concludes the proof of the
proposition, and hence the proof of Theorem \ref{thm:Fourfolds}.
\end{proof}

\begin{re}
The problem in extending this argument to the case of general even $n_i$
is that in the proof of the proposition, when $n/2$ is odd and at least
$3$, $\lambda(\zeta_1)=\lambda(-1)$ can very well be {\em larger} than
$\lambda(\zeta_0)$. In other words,
\jan{Proposition~\ref{prop:Fourfolds}}
does not extend to that situation for general completely monotonic
functions $f$.  Extensive numerical evidence with $d=2$ and $n_1,n_2$
up to $100$ suggests that the proposition {\em does} extend to the
case where $f(x)=1/x$ (and probably even for $f(x)=x^{-\alpha}$ with
$\alpha>0$), in which case also
\jan{Theorem~\ref{thm:Fourfolds}} extends.

Of course, \jan{the Checkerboard Conjecture may still very well hold}
for general completely monotonic $f$ and general even $n_i$, even when
Proposition~\ref{prop:Fourfolds} does not. To prove this, one would
like to have a combinatorial/algebraic argument why the eigenspaces
corresponding to characters $\chi$ having some entries equal to the
relevant $-\exp(2 \pi i/n)$ with $n/2$ odd are no good for finding
$0/1$-vectors. At this moment we have no such argument.  \end{re}

\begin{figure}
\includegraphics[scale=.5]{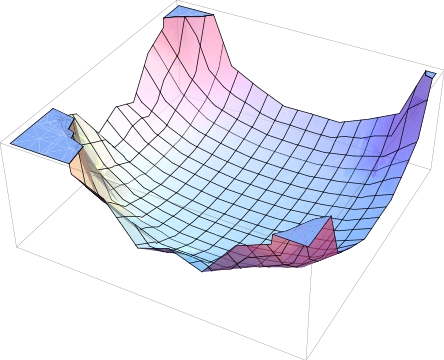}
\caption{Eigenvalues of the energy kernel (interpolated to make an
easily interpretable surface; and truncated near the corners).}
\label{fig:n10}
\end{figure}

\begin{re} \label{re:DFT}
Note that the table of eigenvalues of $A$ contains $|G|$ entries, and
evaluating each individually involves a summation over $|G|$, so that
a straightforward implementation would require ${\mathcal O}(|G|^2)$
(floating-point) operations. However, the table of eigenvalues is nothing
but the {\em Fourier transform} of the table of values $f(\delta(g,0))$,
so that algorithms for ($d$-dimensional) {\em Fast Fourier Transform}
may be used to compute the table of eigenvalues more efficiently.
We have used {\tt Mathematica}'s built-in function {\tt Fourier} in
our numerical experiments. For instance, Figure \ref{fig:n10} shows
the eigenvalues of the energy kernel for $d=2$ and $n_1=n_2=10$ and
$f(x)=x^{-0.3}$ as a function of $(j,k)$, standing for the character
$(e^{\frac{2\pi i j}{10}},e^{\frac{2\pi i k}{10}})$. The ``rough edges''
are related to the even/odd effect in the proof above, but they smooth
out in the middle, so that the minimum is attained at $j=k=5$, which
corresponds to the character $(-1,-1)$.
\end{re}

\section{Open problems} \label{sec:Open}
In this paper we have proved the Checkerboard Conjecture for
toric grids and energy functions $f$ for which the unique minimal
eigenvalue of the energy kernel $A$ is uniquely attained at the character
$(-1,\ldots,-1)$ in the dual group. Indeed, the minimal eigenvalue of $A$
{\em always} governs the solution to fractional energy minimisation
(Proposition~\ref{prop:Eigenvalue}), and if it is uniquely attained at
$(-1,\ldots,-1)$, then the minimisers of fractional energy minimisation
are $0/1$-vectors and hence minimisers of the original
problem. This motivates the following open problems.

\subsection{Generalising Theorem \ref{thm:Fourfolds}}
One would like to extend the results of Section~\ref{sec:Four}
to the case where some of the $n_i$ are larger than $2$ but equal to
$2$ modulo $4$. Computer experiments with two-dimensional grids up to
$100 \times 100$ suggest that for $f(x)=1/x$ (and perhaps for general
inverse power laws) the minimal eigenvalue of the energy kernel is still
uniquely attained at $(-1,\ldots,-1)$. However, a proof of this fact
would require more sophisticated analytical methods than the factoring
into one-dimensional instances that we used there.  Alternatively, one
might try and find a combinatorial argument why the minimisers of total
energy minimisation are necessarily perpendicular to the eigenspaces
of $A$ corresponding to the ``undesirable'' characters, thus sharpening
the relaxation.

\subsection{Euclidean or other metrics}
The question remains for which metrics other than the Lee distance the
Checkerboard Conjecture will hold.  For the Euclidean metric the analogue
of Proposition~\ref{prop:Fourfolds} does not hold for general completely
monotonic functions $f$. The first steps of its proof go through when one
replaces the Lee distance with the square of the Euclidean distance. This
holds in particular for the all-important factorisation step.
One is then lead to minimise
\[ \sum_{g \in \ZZ/n} a^{-\delta(g,0)^2} \zeta^g \]
over the $n$-th roots $\zeta$ of unity, where $\delta$ stands
for the Euclidean metric on the one-dimensional toric grid, and
where $a$ is a real number greater than $1$. However, for $a$
close to $1$ the minimum is {\em not} in general attained at $-1$;
see Figure~\ref{fig:n8euclid}. On the other hand, extensive numerical
evidence for $2$-dimensional grids up to $100 \times 100$ suggests that
the analogue of Proposition~\ref{prop:Fourfolds} does hold when the energy
is given by an inverse power law of the Euclidean distance (and if these
computations were done in suitable interval arithmetic, they would {\em
prove} the Checkerboard Conjecture for those grids). Perhaps one can
prove such a statement by bounding the contribution, in the Bernstein
expansion of these inverse power laws, of the bit where $a$ is small.

\begin{figure}
\includegraphics[width=.5\textwidth]{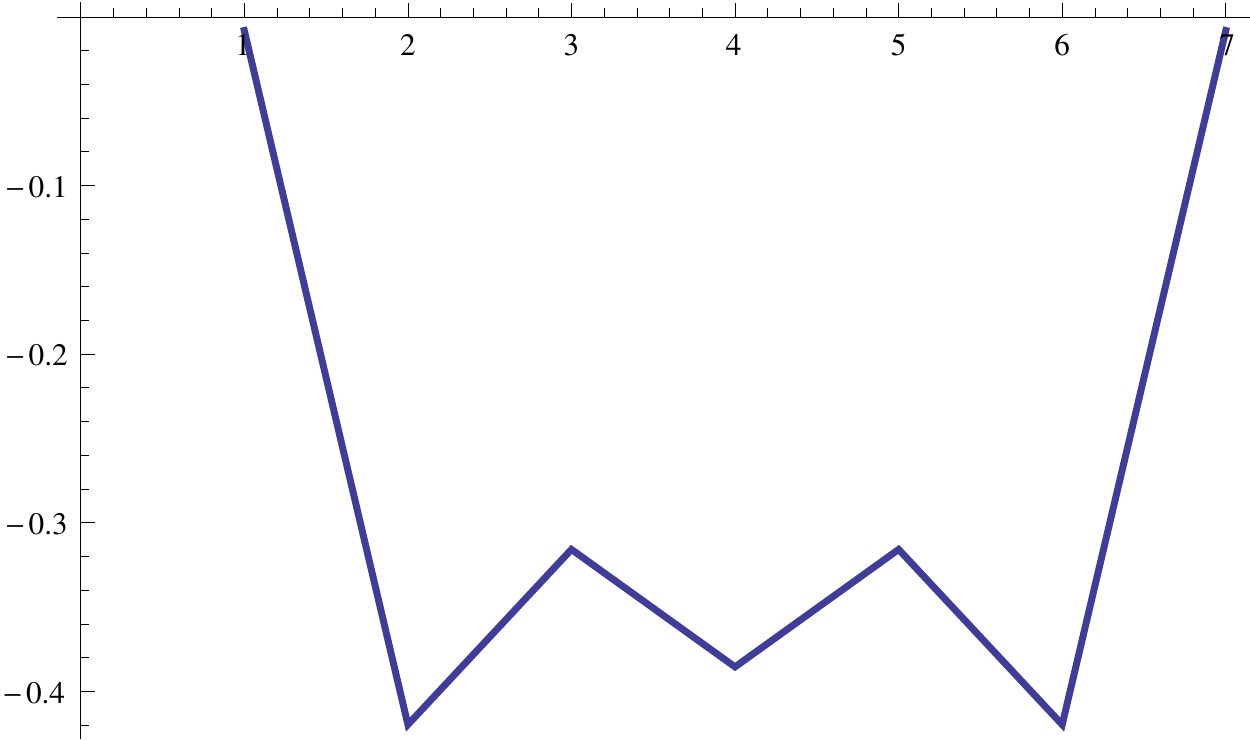}
\caption{Values (vertical axis) of $\sum_{g \in \ZZ/8}
(1.05)^{-\delta(g,0)^2} \zeta^g$ for $\zeta=\exp(\frac{2 k \pi
i}{8}),\ k=1,\ldots,7$ (horizontal axis); $k=0$ is excluded
to make it clear where the minimum is attained: at $k=2,6$
rather than $k=4$.}
\label{fig:n8euclid}
\end{figure}

The Checkerboard Conjecture is not true for {\em every} metric however. For
example, for the Chebyshev (or $L_\infty$) metric one can compute that
the maximal energy of the patterns in Figures~\ref{square2ndbest}
and~\ref{square3rdbest} is lower than the maximal energy of the
checkerboard pattern in a $6\times6$-grid. It is unknown what properties
a metric should have in order for the Checkerboard Conjecture to be true.

\subsection{Nearly optimal configurations}
It is also interesting to consider {\em nearly optimal} configurations
of $n/2$ particles in toric grids. When considering {\em total energy}
$\Etot$, these configurations are probably given by small perturbations
of checkerboard patterns.  However, nearly optimal configurations
for {\em maximal energy} $\Emax$ seem to exhibit beautiful, regular
patterns. For example, for a $6\times6$-grid we have computed the
maximal energy of all possible $\binom{36}{18}$ configurations for
the function $f(x)=\frac{1}{x}$ and for both the Lee metric and
the Euclidean metric.  This computation shows that the Checkerboard
conjecture holds (a fact that one can prove more efficiently using the
discrete Fourier transform as in Remark~\ref{re:DFT}), but also that
the second and third best configurations agree for the Lee and Euclidean
metrics (see Figures~\ref{square2ndbest} and~\ref{square3rdbest}). The
fourth best configurations are different for the two metrics though,
see Figures~\ref{square4thbestLee} and~\ref{square4thbestEuclid}. It
is a widely open question how to prove that certain configurations are
second or third best.

\begin{figure}
\hspace{0.2in}
\subfigure[Second best for Lee and Euclidean metric.]{\includegraphics[width=1.5in,keepaspectratio]{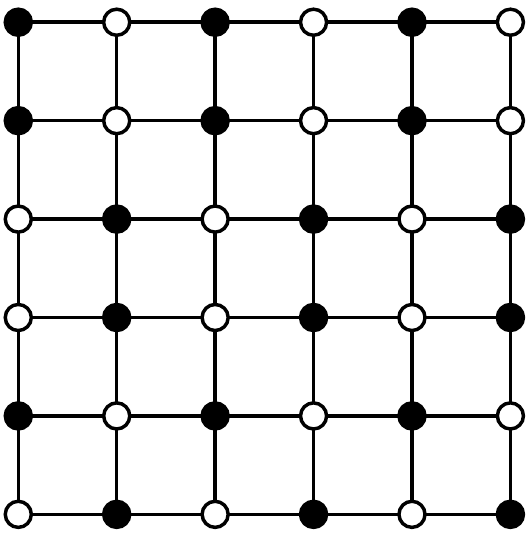}
  \label{square2ndbest}}
  \hspace{0.8in}
\subfigure[Third best for Lee and Euclidean metric.]{
  \includegraphics[width=1.5in,keepaspectratio]{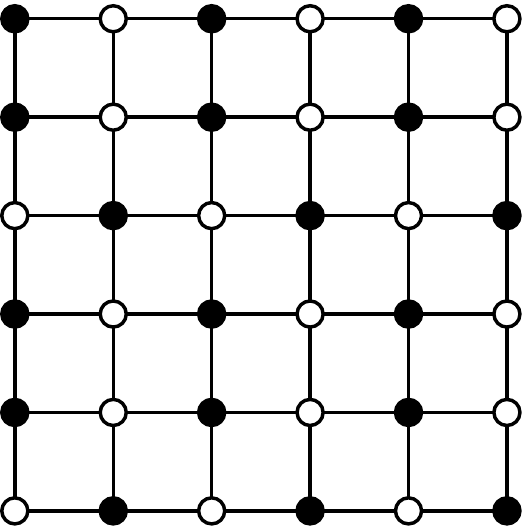}
  \label{square3rdbest}}
  \\
  \hspace{0.2in}
\subfigure[Fourth best for Lee metric.]{\includegraphics[width=1.5in,keepaspectratio]{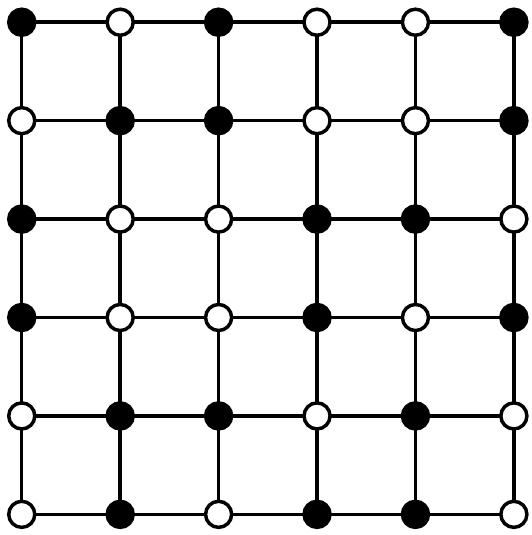}\label{square4thbestLee}}
  \hspace{0.8in}
\subfigure[Fourth best for Euclidean metric.]{
  \includegraphics[width=1.5in,keepaspectratio]{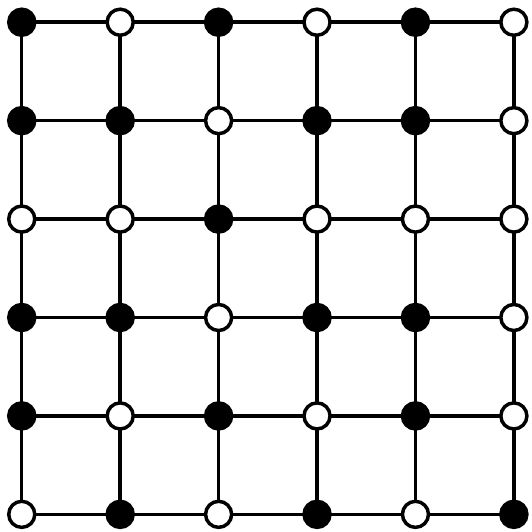}
  \label{square4thbestEuclid}}
\caption{The black nodes denote the nearly optimal
configurations for $\Emax$ in a $6 \times 6$-grid for $f(x)=\frac{1}{x}$.}
\end{figure}

%\bibliographystyle{plain}
%\bibliography{diffeq}

\end{document}